\theoremstyle{definition}%plain
\newtheorem{thm}{Theorem}%[section]
\newtheorem{lem}[thm]{Lemma}
\newtheorem{defn}{Definition}%
\newtheorem{exa}{Example}
\newtheorem{rem}[thm]{\bf{Remark}}
\newcommand{\cX}{\underline{\bf X}}
\newcommand{\cQ}{\underline{\bf Q}}
\newcommand{\cC}{\underline{\bf C}}
\newcommand{\cY}{\underline{\bf Y}}
\newcommand{\cI}{\underline{\bf I}}
\newcommand{\cU}{\underline{\bf U}}
\newcommand{\cV}{\underline{\bf V}}
\newcommand{\cR}{\underline{\bf R}}
\newcommand{\cS}{\underline{\bf S}}
\newcommand{\R}{{\bf R}}
\newcommand{\X}{{\bf X}}
\newcommand{\Y}{{\bf Y}}
\newcommand{\I}{{\bf I}}
\newcommand{\C}{{\bf C}}
\newcommand{\Q}{{\bf Q}}
\newcommand{\tP}{{\bf P}}
\newcommand{\W}{{\bf W}}
\newcommand{\U}{{\bf U}}
\newcommand{\V}{{\bf V}}
\newcommand{\cu}{{\bf u}}
\newcommand{\cc}{{\bf c}}
\newcommand{\rr}{{\bf r}}
\journal{Elsevier}
\begin{document}

\begin{frontmatter}

%% Title, authors and addresses

%% use the tnoteref command within \title for footnotes;
%% use the tnotetext command for theassociated footnote;
%% use the fnref command within \author or \address for footnotes;
%% use the fntext command for theassociated footnote;
%% use the corref command within \author for corresponding author footnotes;
%% use the cortext command for theassociated footnote;
%% use the ead command for the email address,
%% and the form \ead[url] for the home page:
%% \title{Title\tnoteref{label1}}
%% \tnotetext[label1]{}
%% \author{Name\corref{cor1}\fnref{label2}}
%% \ead{email address}
%% \ead[url]{home page}
%% \fntext[label2]{}
%% \cortext[cor1]{}
%% \affiliation{organization={},
%%             addressline={},
%%             city={},
%%             postcode={},
%%             state={},
%%             country={}}
%% \fntext[label3]{}

\title{Robust Low-Tubal-rank tensor recovery Using Discrete Empirical Interpolation Method with Optimized Slice/Feature Selection}

%% use optional labels to link authors explicitly to addresses:
 \author[label1]{Salman Ahmadi-Asl}
 \affiliation[label1]{organization={Center for Artificial Intelligence Technology, Skolkovo Institute of Science and Technology, Moscow, Russia,\,s.asl@skoltech.ru}
            }

\author[label1]{Anh-Huy Phan}

%\affiliation[label1]{organization={},
%}

%\affiliation[label1]{organization={},
%}

\author[label2]{Cesar F. Caiafa}
\affiliation[label2]{organization={Instituto Argentino de Radioastronomia—CCT La Plata, CONICET/CIC-PBA/UNLP, Villa Elisa 1894, Argentina},}

\author[label1]{Andrzej Cichocki}

%\affiliation[inst1]{organization={Department One},%Department and Organization
%            addressline={Address One}, 
%            city={City One},
%            postcode={00000}, 
%            state={State One},
%            country={Country One}}

% \author[inst2]{Author Two}
% \author[inst1,inst2]{Author Three}

% \affiliation[inst2]{organization={Department Two},%Department and Organization
%             addressline={Address Two}, 
%             city={City Two},
%             postcode={22222}, 
%             state={State Two},
%             country={Country Two}}

\begin{abstract}
In this paper, we extend the Discrete Empirical Interpolation Method (DEIM) to the third-order tensor case based on the t-product and use it to select important/significant lateral and horizontal slices/features. The proposed Tubal DEIM (TDEIM) is investigated both theoretically and numerically. The experimental results show that the TDEIM can provide more accurate approximations than the existing methods. An application of the proposed method to the supervised classification task is also presented. 
%The codes of the proposed algorithms  are provided at \url{https://github.com/SalmanAhmadi-Asl/Tubal_DEIM/upload}.
%Some simulations are provided to the feasibility of the proposed approach.
\end{abstract}

%%Graphical abstract
%\begin{graphicalabstract}
%\includegraphics{grabs}
%\end{graphicalabstract}

%%Research highlights
% \begin{highlights}
% \item Research highlight 1
% \item Research highlight 2
% \end{highlights}

\begin{keyword}
%% keywords here, in the form: keyword \sep keyword
Cross tensor approximation, DEIM sampling, tubal product
%% PACS codes here, in the form: \PACS code \sep code
%\PACS 0000 \sep 1111
%% MSC codes here, in the form: \MSC code \sep code
%% or \MSC[2008] code \sep code (2000 is the default)
\MSC 15A69 \sep 46N40 \sep 15A23
\end{keyword}

\end{frontmatter}

%% \linenumbers

%% main text
\section{Introduction}
The singular value decomposition (SVD) \cite{beltrami1873sulle,jordan1874memoire,jordan1875essai,stewart1993early} is costly for the computation of low-rank approximations of large-scale data matrices. To solve this problem, the matrix CUR (MCUR) or cross approximation methods have been proposed where a fast low-rank matrix approximation of a data matrix is computed using some selected  columns and rows  \cite{tyrtyshnikov2000incomplete,goreinov1997theory}. The MCUR approximation is not only useful in terms of time complexity but can also provide interpretable approximations as the factor matrices preserve the properties of the original data matrix, such as nonnegativity, sparsity, or the elements being integers \cite{mahoney2011randomized}. {There are several approaches to sample columns and rows of a  matrix. Generally, one can categorize these sampling methods into {\it randomized} and {\it deterministic} methods. The uniform, length-squared, and leverage-score probability distributions \cite{mahoney2011randomized} as randomized algorithms have been widely used in the literature for sampling columns/rows of a large-scale data matrix. On the other hand, the maxvolume \cite{goreinov2010find} and the discrete empirical interpolation method (DEIM) \cite{sorensen2016deim} are two well-known deterministic sampling algorithms. The first method samples columns/rows in such a way that the volume\footnote{The volume of a square matrix is defined as the absolute value of the determinant of a matrix, whereas the volume of a rectangular matrix is defined as the multiplication of its singular values.} of the intersection matrix is maximized. It was shown that this method can provide almost optimal approximations. The DEIM method 
%which is the building block algorithm of this paper, 
uses a basis 
%of a given data matrix to perform the sampling procedure. More precisely, let $\X\in\mathbb{R}^{I_1\times I_2}$ be a given data matrix. Then, considering a basis matrix $\U\in\mathbb{R}^{I_1\times R}$ for the range of the matrix $\X$,
e.g., the top left singular vectors, 
%the DEIM uses them 
to sample rows of a matrix. 
%$\X$. Having computed a basis for the range of $\X^T$, 
The columns of the matrix $\X^T$ are sampled and treated as the rows of the matrix $\X$. The DEIM is indeed an interpolative MCUR method as the approximation obtained by this method matches the actual columns/rows of the original data matrix in the sampled indices. We should point out that the Cross2D \cite{savostyanov2006polilinear,tyrtyshnikov2000incomplete} is also a deterministic sampling approach\footnote{Except the first stage of the algorithm where the first column is selected randomly, all other stages are performed deterministically.} which sequentially interpolates a given matrix in new sampled columns and rows. However, in contrast to the Cross2D, which for different runs can provide different MCUR approximations, the DEIM method for a given fixed basis matrix always gives the same MCUR approximation. Another benefit of the DEIM method is that for two given data matrices $\X,\Y$, if we can find a shared basis matrix for them, for example by the Generalized SVD (GSVD) \cite{van1976generalizing,paige1981towards}, it can be used to sample columns of the mentioned matrices. Here, the sampled columns of the matrices $\X,\,\Y$ have the same indices because the DEIM method employees the same basis for two datasets. Indeed the authors in \cite{gidisu2022generalized,gidisu2022restricted} use this  trick for generalizing the MCUR to the tensor case.} 
 
The MCUR methods are generalized to tensors in \cite{oseledets2008tucker,oseledets2010tt,caiafa2010generalizing,drineas2007randomized,tarzanagh2018fast}. The methods in \cite{oseledets2008tucker,caiafa2010generalizing,drineas2007randomized} are related to the Tucker model \cite{tucker1964extension,tucker1966some}, the method in  \cite{oseledets2010tt} is for the Tensor Train model \cite{oseledets2011tensor} and the approach proposed in   \cite{tarzanagh2018fast} is for the tensor/tubal SVD (t-SVD) \cite{kilmer2013third}. In this paper, we focus on the tensor SVD (t-SVD), which is defined based on the tubal product (t-product). The t-SVD has similar properties as the classical SVD. In particular, in contrast to the Tucker decomposition \cite{tucker1964extension,tucker1966some} or Canonical polyadic decomposition \cite{hitchcock1927expression,hitchcock1928multiple}, its truncation provides the best low tubal rank approximation in the least-squares sense. The tubal leverage score sampling and uniform sampling are used in \cite{tarzanagh2018fast} and \cite{ahmadi2021cross,ahmadi2022cross}, respectively, for sampling lateral and horizontal slices. To the best of our knowledge, the DEIM method has not been generalized to the tensor case yet based on the t-product, while its better approximation accuracy and robustness with respect to rank variation has been shown for matrices \cite{sorensen2016deim}. Note that the DEIM sampling is used in \cite{saibaba2016hoid} to sample fibers for the computation of low Tucker rank approximation but our work is different from that as it is for the t-SVD model. Also, there is no any paper comparing different sampling approaches for lateral/horizontal slices. This paper aims to investigate more deeply these issues and motivated by the work \cite{sorensen2016deim}, we extend the DEIM method to the t-product, which we refer to it as tubal DEIM (TDEIM). The TDEIM selects the indices of important lateral and horizontal slices. The proposed method outperforms all baseline sampling algorithms including {\it the top leverage scores method}, {\it the leverage score sampling} and {\it the uniform sampling without replacement}.

The key contributions of this work are stated as follows

\begin{itemize}
    \item Extending the discrete empirical
interpolation method (DEIM) to the tensor tubal case based on the t-product and using it to select important/optimized lateral and horizontal slices/features. 

\item Developing a new hybrid TDEIM algorithm  that uses the tubal leverage scores for lateral/horizontal slice sampling.
%to select more lateral/horizontal slices efficiently. 
%It is a natural extension of the algorithm developed for the matrix case.
    
    \item Extensive simulations on synthetic and real-world datasets with an application to the supervised classification task to evaluate the performance and efficiency of the proposed algorithms. 
\end{itemize}

This paper is organized as follows. We first present some basic tensor concepts in Section \ref{Sec:prelim}. The t-SVD is introduced in Section \ref{Sec:tSVD} with some intuitions behind this model. Tha matrix and tensor cross approximation methods are outlined in Section \ref{Sec:MACA}. The Discrete Empirical Interpolation Method (DEIM) is presented in Section \ref{deim} and its extension to tensors is studied in Section \ref{deimt}. The computer simulation results are presented in Section \ref{Sec:sim} and a conclusion is given in Section \ref{Sec:Con}.

\section{Preliminaries}\label{Sec:prelim}
To present the main materials, we first need to introduce the basic notations and definitions. An underlined bold capital letter, a bold capital letter, and a bold lower letter
denote a tensor, a matrix, and a vector, respectively. The subtensors, which are generated by fixing all but two modes are called slices. For a special case of a third-order tensor $\underline{\X},$ we call the slices $\underline{\X}(:,:,k),\,\underline{\X}(:,j,:),\,\underline{\X}(i,:,:)$ frontal, lateral, and horizontal slices. For the brevity of presentation, sometimes for a frontal slice $\cX(:,:,i)$, we use the notation $\cX_i$. Similarly, fibers are generated by fixing 
all but one mode. For a third-order tensor $\cX,$ 
a special type of fiber that is generated by fixing the first and the second modes, e.g. $\underline{\X}(i,j,:)$ is called a tube. {The notation “conj” denotes the component-wise complex conjugate of a matrix.} The notation $\|.\|_F$ stands for the Frobenius norm of tensors/matrices and $|.|$ is used to the denote absolute value of a number. The symbol $\|.\|_2$ denotes the spectral norm of matrices or Euclidean norm of vectors. The Moore–Penrose inverse is denoted by $\dag$. We use the MATLAB notation to denote a subset of a matrix or tensor. For example, for a given data matrix $\X$, by $\X(:,\mathcal{J})$ and $\X(\mathcal{I},:)$ we mean two matrices, which sample a part of rows and columns if the matrix $\X$, respectively where $\mathcal{I}\subset\{1,2,\ldots,I_1\}$ and $\mathcal{J}\subset\{1,2,\ldots,I_2\}$. 

\begin{defn} ({\bf t-product})
Let $\underline{\mathbf X}\in\mathbb{R}^{I_1\times I_2\times I_3}$ and $\underline{\mathbf Y}\in\mathbb{R}^{I_2\times I_4\times I_3}$, the t-product $\underline{\mathbf X}*\underline{\mathbf Y}\in\mathbb{R}^{I_1\times I_4\times I_3}$ is defined as follows
\begin{equation}\label{TPROD}
\underline{\mathbf C} = \underline{\mathbf X} * \underline{\mathbf Y} = {\rm fold}\left( {{\rm circ}\left( \underline{\mathbf X} \right){\rm unfold}\left( \underline{\mathbf Y} \right)} \right),
\end{equation}
where 
\[
{\rm circ} \left(\underline{\mathbf X}\right)
=
\begin{bmatrix}
\underline{\mathbf X}(:,:,1) & \underline{\mathbf X}(:,:,I_3) & \cdots & \underline{\mathbf X}(:,:,2)\\
\underline{\mathbf X}(:,:,2) & \underline{\mathbf X}(:,:,1) & \cdots & \underline{\mathbf X}(:,:,3)\\
 \vdots & \vdots & \ddots &  \vdots \\
 \underline{\mathbf X}(:,:,I_3) & \underline{\mathbf X}(:,:,I_3-1) & \cdots & \underline{\mathbf X}(:,:,1)
\end{bmatrix},
\]
and 
\[
{\rm unfold}(\underline{\mathbf Y})=
\begin{bmatrix}
\underline{\mathbf Y}(:,:,1)\\
\underline{\mathbf Y}(:,:,2)\\
\vdots\\
\underline{\mathbf Y}(:,:,I_3)
\end{bmatrix},\hspace*{.5cm}
\underline{\mathbf Y}={\rm fold} \left({\rm unfold}\left(\underline{\mathbf Y}\right)\right).
\]
As described in \cite{kilmer2011factorization,kilmer2013third}, the t-product is performed via Discrete Fourier Transform (DFT) and in \cite{kernfeld2015tensor} it was suggested to use any invertible transformation rather than the DFT. Later, nonivertible and even nonlinear transformations were used in \cite{jiang2020framelet} and \cite{li2022nonlinear}, respectively. The advantage of using such unitary transformations is the possibility of computing the t-SVD of a data tensor with lower tubal rank \cite{song2020robust,jiang2020framelet} (to be discussed later). {The MATLAB command ${\rm fft}(\cX,[],3)$, computes the DFT of all tubes of the data tensor $\cX$.} The fast version of the t-product is summarized in Algorithm \ref{ALG:TVDP} where the DFT of only the first $\lceil \frac{I_3+1}{2}\rceil$ frontal slices is needed while the original version processes all frontal slices \cite{kilmer2011factorization,kilmer2013third}. 
%This elegant idea was suggested in \cite{hao2013facial,lu2019tensor
%}.
\end{defn}

\begin{defn} ({\bf Transpose})
The transpose of a tensor $\underline{\mathbf X}\in\mathbb{R}^{I_1\times I_2\times I_3}$ is denoted by $\underline{\mathbf X}^{T}\in\mathbb{R}^{I_2\times I_1\times I_3}$ produced by applying the transpose to all frontal slices of the tensor $\underline{\mathbf X}$ and reversing the order of the transposed frontal slices from the second till the last one.
\end{defn}

\begin{defn} ({\bf Identity tensor})
Identity tensor $\underline{\mathbf I}\in\mathbb{R}^{I_1\times I_1\times I_3}$ is a tensor whose first frontal slice is an identity matrix of size $I_1\times I_1$ and all other frontal slices are zero. It is easy to show $\underline{\I}*\underline{\X}=\underline{\X}$ and $\underline{\X}*\underline{\I} =\underline{\X}$ for all tensors of conforming sizes.
\end{defn}
\begin{defn} ({\bf Orthogonal tensor})
A tensor $\underline{\mathbf X}\in\mathbb{R}^{I_1\times I_1\times I_3}$ is orthogonal if ${\underline{\mathbf X}^T} * \underline{\mathbf X} = \underline{\mathbf X} * {\underline{\mathbf X}^ T} = \underline{\mathbf I}$.
\end{defn}

\begin{defn} ({\bf f-diagonal tensor})
If all frontal slices of a tensor are diagonal then the tensor is called f-diagonal.
\end{defn}

\begin{defn}
({\bf Inverse of a tensor}) The inverse of the tensor $\cX\in\mathbb{R}^{I_1\times I_1\times I_3}$ is denoted by $\cX^{-1}\in\mathbb{R}^{I_1\times I_1\times I_3}$ is a unique tensor satisfying the following equations
\begin{eqnarray*}
\cX^{-1}*\cX=\cX*\cX^{-1}={\bf I},
\end{eqnarray*}
where ${\bf I}$ is an identity tensor of size $I_1\times I_1\times I_3$. The inverse of a tensor can be computed in the Fourier domain very fast as presented in Algorithm \ref{ALG:TSVDP}.
\end{defn}

\RestyleAlgo{ruled}
\LinesNumbered
\begin{algorithm}
\SetKwInOut{Input}{Input}
\SetKwInOut{Output}{Output}\Input{Two data tensors $\underline{\mathbf X} \in {\mathbb{R}^{{I_1} \times {I_2} \times {I_3}}},\,\,\underline{\mathbf Y} \in {\mathbb{R}^{{I_2} \times {I_4} \times {I_3}}}$} 
\Output{t-product $\underline{\mathbf C} = \underline{\mathbf X} * \underline{\mathbf Y}\in\mathbb{R}^{I_1\times I_4\times I_3}$}
\caption{Fast t-product of two tensors \cite{kilmer2011factorization,lu2019tensor}}\label{ALG:TVDP}
      {
      $\widehat{\underline{\mathbf X}} = {\rm fft}\left( {\underline{\mathbf X},[],3} \right)$;\\
      $\widehat{\underline{\mathbf Y}} = {\rm fft}\left( {\underline{\mathbf Y},[],3} \right)$;\\
\For{$i=1,2,\ldots,\lceil \frac{I_3+1}{2}\rceil$}
{                        
$\widehat{\underline{\mathbf C}}\left( {:,:,i} \right) = \widehat{\underline{\mathbf X}}\left( {:,:,i} \right)\,\widehat{\underline{\mathbf Y}}\left( {:,:,i} \right)$;\\
}
\For{$i=\lceil\frac{I_3+1}{2}\rceil+1,\ldots,I_3$}{
$\widehat{\underline{\mathbf C}}\left( {:,:,i} \right)={\rm conj}(\widehat{\underline{\mathbf C}}\left( {:,:,I_3-i+2} \right))$;
}
$\underline{\mathbf C} = {\rm ifft}\left( {\widehat{\underline{\mathbf C}},[],3} \right)$;   
       	}       	
\end{algorithm}

\RestyleAlgo{ruled}
\LinesNumbered
\begin{algorithm}
\SetKwInOut{Input}{Input}
\SetKwInOut{Output}{Output}\Input{The data tensor $\underline{\mathbf X} \in {\mathbb{R}^{{I_1} \times {I_1} \times {I_3}}}$} 
\Output{Tensor Inverse $\underline{\mathbf X}^{-1}\in\mathbb{R}^{I_1\times I_1\times I_3}$}
\caption{Fast Inverse computation of the tensor $\underline{\bf X}$}\label{ALG:TSVDP}
      {
      $\widehat{\underline{\mathbf X}} = {\rm fft}\left( {\underline{\mathbf X},[],3} \right)$;\\
\For{$i=1,2,\ldots,\lceil \frac{I_3+1}{2}\rceil$}
{                        
$\widehat{\underline{\mathbf C}}\left( {:,:,i} \right) = {\rm inv}\,(\widehat{\X}(:,:,i))$;\\
}
\For{$i=\lceil\frac{I_3+1}{2}\rceil+1,\ldots,I_3$}{
$\widehat{\underline{\mathbf C}}\left( {:,:,i} \right)={\rm conj}(\widehat{\underline{\mathbf C}}\left( {:,:,I_3-i+2} \right))$;
}
$\underline{\mathbf X}^{\dag} = {\rm ifft}\left( {\widehat{\underline{\mathbf C}},[],3} \right)$;   
       	}       	
\end{algorithm}

The following identity 
\begin{eqnarray}\label{eq_fou}
\|\underline{\X}\|^2_F=\frac{1}{I_3}\sum_{i=1}^{I_3}\|\widehat{\underline{\X}}(:,:,i)\|_F^2,
\end{eqnarray}
is useful in our error analysis where $\underline{\X}\in\mathbb{R}^{I_1\times I_2\times I_3}$ is a given data tensor and $\widehat{\X}(:,:,i)$ is the $i$-th frontal slice of the tensor $\widehat{\X}={\rm fft}(\X,[],3)$, see \cite{zhang2018randomized}. We will use this identity in our theoretical analyses.

\section{Tensor decompositions based on the t-product and tubal leverage-scores}\label{Sec:tSVD}
The tensor SVD (t-SVD) is a viable tensor decomposition that represents a tensor as the t-product of three tensors. The first and last tensors are orthogonal while the middle tensor is an f-diagonal tensor. The generalization of the t-SVD to tensors of order higher than 3 is done in \cite{martin2013order}. Let $\underline{\X}\in\mathbb{R}^{I_1\times I_2\times I_3}$, then the t-SVD gives the following model 
\[
\underline{\X}\approx \cU*\underline{\bf S}*\cV^T,
\]
where $\cU\in\mathbb{R}^{I_1\times R\times I_3 },\,\underline{\bf S}\in\mathbb{R}^{R\times R\times I_3},$ and $\cV\in\mathbb{R}^{I_2\times R \times I_3}$. The tensors $\cU$ and $\cV$ are orthogonal while the tensor $\underline{\bf S}$ is f-diagonal. {We also refer to $\cU$ and $\cV$, as the $R$ leading left and right singular lateral slices of the tensor $\cX$, respectively.} The procedure of the computation of the t-SVD is presented in Algorithm \ref{ALG:Tsvd}. As can be seen, Algorithm \ref{ALG:Tsvd} only needs the SVD of the first $\lceil \frac{I_3+1}{2}\rceil$ slices in the Fourier domain. This idea was suggested in \cite{hao2013facial,lu2019tensor} taking into account the special structure of discrete Fourier transform, while the original t-SVD algorithm developed in \cite{kilmer2011factorization,kilmer2013third} involves the SVD of all frontal slices. Note that this trick is applicable only for real tensors and for complex tensors, we need to compute the SVD of all frontal slices in the Fourier domain. Naturally, we should utilize this idea to skip redundant computations. 

%Of course, we can extend other matrix decompositions to the tensor case based on the t-product. For example, the tubal QR (t-QR) decomposition \cite{kilmer2013third} is presented in Algorithm  \ref{ALG:TQR}, which decomposes a given tensor $\cX\in\mathbb{R}^{I_1\times I_2\times I_3},$ in form $\cX=\cQ*\cR$. The tubal LU (t-LU) decomposition can be similarly defined by replacing the LU decomposition in Line 3 of Algorithm \ref{ALG:TQR} instead of the QR decomposition.

{
%Similar to the notion of leverage scores defined for matrices used for sampling important columns and rows of a matrix \cite{drineas2008relative}, 
The tubal leverage scores of lateral and horizontal slices can be defined for a third order tensor \cite{tarzanagh2018fast}.}  The tubal leverage scores of horizontal slices to $\cU,$ are defined as follows $l_i=\|\underline{\U}(i,:,:)\|^2_F,\,\,i=1,2,\ldots,I_1.$ The tubal leverage scores of lateral slices can be computed similarly for the tensor $\cV$. Here, we consider $l'_j=\|\underline{\V}(j,:,:)\|^2_F,\,\,j=1,2,\ldots,I_2.$
The important lateral/horizontal slices can be selected according to high tubal leverage scores or using the probability distributions 
\begin{eqnarray}\label{pro}
P_i=\frac{l_i}{R},\,\,i=1,2,\ldots,I_1,\quad P_j=\frac{l'_j}{R},\,\,j=1,2,\ldots,I_2,
\end{eqnarray}
where the fractions $P_i$ and $P_j$ are the probabilities of selecting  the $i$-th horizontal slices and the $j$-th lateral slices, respectively. It is obvious that
\[
\sum_{i=1}^{I_1}\|\cU(i,:,:)\|_F^2=\sum_{j=1}^{I_2}\|\cV(j,:,:)\|_F^2=R,
\]
and the fractions in \eqref{pro} indeed define probability distributions. The authors in \cite{tarzanagh2018fast} used the tubal leverage scores to sample horizontal and lateral slices for low tubal rank approximation. We will use the tubal leverage scores in Section \ref{Sec:sim} as a baseline method to sample horizontal and lateral slices.

\RestyleAlgo{ruled}
\LinesNumbered
\begin{algorithm}
\SetKwInOut{Input}{Input}
\SetKwInOut{Output}{Output}\Input{The data tensor $\underline{\mathbf X} \in {\mathbb{R}^{{I_1} \times {I_2} \times {I_3}}}$ and a target tubal-rank $R$} 
\Output{The truncated t-SVD of the tensor $\cX$ as $\underline{\bf X}\approx\cU*\underline{\bf S}*\cV^T$}
\caption{The truncated t-SVD decomposition}\label{ALG:Tsvd}
      {
      $\widehat{\underline{\mathbf X}} = {\rm fft}\left( {\underline{\mathbf X},[],3} \right)$;\\
\For{$i=1,2,\ldots,\lceil \frac{I_3+1}{2}\rceil$}
{                        
$[\widehat{\underline{\mathbf U}}\left( {:,:,i} \right),\widehat{\underline{\bf  S}}(:,:,i),\widehat{\V}(:,:,i)] = {\rm Truncated\operatorname{-} svd}\,(\widehat{\X}(:,:,i),R)$;\\
}
\For{$i=\lceil\frac{I_3+1}{2}\rceil+1,\ldots,I_3$}{
$\widehat{\underline{\mathbf U}}\left( {:,:,i} \right)={\rm conj}(\widehat{\underline{\mathbf U}}\left( {:,:,I_3-i+2} \right))$;\\
$\widehat{\underline{\mathbf S}}\left( {:,:,i} \right)=\widehat{\underline{\mathbf S}}\left( {:,:,I_3-i+2} \right)$;\\
$\widehat{\underline{\mathbf V}}\left( {:,:,i} \right)={\rm conj}(\widehat{\underline{\mathbf V}}\left( {:,:,I_3-i+2} \right))$;
}
$\underline{\mathbf U}_R= {\rm ifft}\left( {\widehat{\underline{\mathbf U}},[],3} \right)$;
$\underline{\mathbf S}_R= {\rm ifft}\left( {\widehat{\underline{\mathbf S}},[],3} \right)$; 
$\underline{\mathbf V}_R= {\rm ifft}\left( {\widehat{\underline{\mathbf V}},[],3} \right)$
       	}       	
\end{algorithm}

% \RestyleAlgo{ruled}
% \LinesNumbered
% \begin{algorithm}
% \SetKwInOut{Input}{Input}
% \SetKwInOut{Output}{Output}\Input{The data tensor $\underline{\mathbf X} \in {\mathbb{R}^{{I_1} \times {I_2} \times {I_3}}}$} 
% \Output{The t-QR decomposition $\cX=\cQ*\cR$}
% \caption{Fast t-QR decomposition of the tensor $\underline{\bf X}$}\label{ALG:TQR}
%       {
%       $\widehat{\underline{\mathbf X}} = {\rm fft}\left( {\underline{\mathbf X},[],3} \right)$;\\
% \For{$i=1,2,\ldots,\lceil \frac{I_3+1}{2}\rceil$}
% {                        
% $[\widehat{\underline{\mathbf Q}}\left( {:,:,i} \right),\widehat{\R}(:,:,i)] = {\rm qr}\,(\widehat{\X}(:,:,i),0)$;\\
% }
% \For{$i=\lceil\frac{I_3+1}{2}\rceil+1\ldots,I_3$}{
% $\widehat{\underline{\mathbf Q}}\left( {:,:,i} \right)={\rm conj}(\widehat{\underline{\mathbf Q}}\left( {:,:,I_3-i+2} \right))$;\\
% $\widehat{\underline{\mathbf R}}\left( {:,:,i} \right)={\rm conj}(\widehat{\underline{\mathbf R}}\left( {:,:,I_3-i+2} \right))$;
% }
% $\underline{\mathbf Q}= {\rm ifft}\left( {\widehat{\underline{\mathbf Q}},[],3} \right)$;\\
% $\underline{\mathbf R}= {\rm ifft}\left( {\widehat{\underline{\mathbf R}},[],3} \right)$; 
%        	}       	
% \end{algorithm}

\section{Matrix and tensor CUR approximation methods}\label{Sec:MACA}
Matrix CUR or cross approximation is a popular method for fast low-rank matrix approximation with interpretable factor matrices and linear computational complexity \cite{goreinov1997theory}. It samples individual columns and rows of a data matrix, so it can preserve the properties of the original data matrix such as nonnegativity or sparsity. Let $\X\in\mathbb{R}^{I_1\times I_2}$ be a given data matrix. The CUR approximation seeks the approximation of the form $\X\approx\C\U\R$ where $\C=\X(:,\mathcal{J}),\,\R=\X(\mathcal{I},:)$ and $\mathcal{I}\subset \{1,2,\ldots,I_1\}$ and $\mathcal{J}\subset\{1,2,\ldots,I_2\}$. The optimal middle matrix is 
\begin{eqnarray}\label{midmatrix}
\U=\C^{\dag}\X{\R}^{\dag}.
\end{eqnarray}
%obtained by solving the least-squares problem 
%\begin{eqnarray}
% \min_{\U} \|\X-\C\U\R\|^2_F. 
%\end{eqnarray}
This procedure requires one or two passes over the data matrix $\X$ and this depends on how the indices are sampled. For instance, for the case of uniform sampling, we do not need to view the whole data matrix while for the case of the leverage-scores or the DEIM, we need access to the whole data matrix. However, for computing the middle matrix $\U$ in \eqref{midmatrix} both of them require the access to the whole data matrix $\X,$ which is prohibitive for extremely large-scale matrices. To ease the computational complexity, it was proposed to use the Moore-Penrose peseudoinverse of the intersection matrix obtained by crossing the sampled columns and rows, i.e., $\U=(\X(\mathcal{J},{\mathcal{I}}))^{\dag}$ and considering the CUR approximation 
\begin{eqnarray}\label{midlmatrix_2}
\X\approx\C\U\R.
\end{eqnarray}
It is demonstrated in \cite{sorensen2016deim} that the latter approximation indeed interpolates the data matrix $\X$ at the sampled column and row indices. More precisely, if $\W=\X-\C\U\R$, then $\W(\mathcal{I},:)=0$ and $\W(:,\mathcal{J})=0$. This does not necessarily hold if we use the middle matrix $\U$ computed via \eqref{midmatrix}. Moreover, it is proved in\cite{hamm2020perspectives} that for a data matrix $\X$ with exact rank $R$, the CUR approximation in \eqref{midlmatrix_2} is exact provided that ${\rm rank}(\X)={\rm rank}(\X(\mathcal{J},\mathcal{I}))$. 

The columns and rows may be selected either deterministically or randomly for which additive or relative approximation errors can be achieved. In the deterministic case, it is known that the columns or rows with maximum volume can provide almost optimal solutions \cite{goreinov2010find}. The 
 discrete empirical interpolation method (DEIM) is another type of deterministic method to select the columns and rows of a matrix that relies on the top singular vectors\cite{sorensen2016deim}.  The sampling methods based on a prior probability distribution are also widely used in the literature using uniform, length-squared, or leverage-score probability distributions, see \cite{mahoney2011randomized}, for an overview on these sampling approaches. It has been demonstrated that sampling columns with leverage-score probability distribution can provide approximations with relative error accuracy, which is of more interest in practice \cite{drineas2008relative}. The MCUR was extended to the tensor case for different types of tensors decompositions. For example, the authors in \cite{mahoney2011randomized} proposed to sample some columns of the unfolding matrices randomly to approximate the factor matrices of the Tucker decomposition. Also, it is suggested to use the Cross2D method to deterministically sample the fibers instead of random sampling. The DEIM method and leverage score sampling methods are also used in\cite{saibaba2016hoid} to sample columns of the unfolding matrices to approximate the factor matrices. The MCUR is used in \cite{oseledets2010tt} to compute low rank approximation of unfolding matrices to compute the TT approximation. The cross approximation is generalized based on the t-product in \cite{tarzanagh2018fast}, where some horizontal and lateral slices are selected. Here, the slices are selected based on the tubal leverage scores. The uniform sampling without replacements are used in \cite{ahmadi2021cross,ahmadi2022cross} for image/video completion and compression. However, there are only these works on tubal CUR approximation and this problem has not been investigated extensively. In this paper, we extend the DEIM method to the tensor case based on the t-product and extensively compare it with the known sampling algorithms. The results show more accurate approximations of the proposed sampling method compared to the baseline methods as was also reported for the matrix case before.   
% \RestyleAlgo{ruled}
% \LinesNumbered
% \begin{algorithm}
% \SetKwInOut{Input}{Input}
% \SetKwInOut{Output}{Output}\Input{A data matrix ${\mathbf X} \in {\mathbb{R}^{{I_1} \times {I_2}}}$, an approximation error $\epsilon$}
% \Output{Low rank matrix approximation ${\mathbf X}={\U}{\V}$}
% ${\bf U}=0,\,{\bf V}=0,\,\mu=0,\,r_0=0,$ $j_1$ is a random column index
% \caption{Adaptive cross approximation algorithm
% (ACA) \cite{bebendorf2000approximation,bebendorf2006accelerating,zhao2005adaptive} }\label{Cross_Alg}
% \For{$k=1,2,\ldots,{\rm min}(I_1,I_2)$}
% {
% ${\bf u}_{k}=\E_{k-1}(:,j_k)=\X(:,j_k)-\U\V(:,j_k)$;\\
% $i_k={\rm \arg\max}_i |{\bf u}_k(i)|$;\\
% ${\bf u}_k\leftarrow {\bf u}_k/{\bf u}_k(i_k)$;\\
% ${\bf v}_k^T={\bf E}_{k-1}(i_k,:)={\bf X}(i_k,:)-{\bf U}(i_k,:){\bf V}$;\\
% $j_{k+1}=\arg\max_{j}|{\bf v}_k(j)|$;\\
% $\rho^2=\|{\bf u}_k\|_2^2\|{\bf v}_k\|_2^2$\\
% $\mu^2\leftarrow\mu^2+\rho^2+2\sum_{j=1}^{k-1}{\bf V}(j,:){\bf v}_k{\bf u}_k^T{\bf U}(:,j)$;\\
% ${\bf U}\leftarrow [{\bf U},{\bf u}_k],\,U\leftarrow [{\bf V},{\bf v}^T_k];$\\
% $\,r_k=r_{k-1}+1$;\\
% \If{$\rho<\epsilon\mu$}{{\bf Break}}
% }       	
% \end{algorithm}

\section{Discrete Empirical Interpolation Method (DEIM) for column/row sampling}\label{deim}
The DEIM is a building block in our formulation.
%, therefore, we first discuss this algorithm. 
It is a discrete form of its continuous version {\it Empirical Interpolation Method} \cite{barrault2004empirical,chaturantabut2010nonlinear} for model order reduction of nonlinear dynamical systems. The DEIM method captures important variables of the underlying dynamics system. It was then used to sample important columns/rows of matrices to compute a low-rank matrix approximation \cite{sorensen2016deim}. To start with describing the DEIM method, let us introduce the {\it interpolatory projector}, which plays an important role in our analysis.

\begin{defn}
 Assume that $\U\in\mathbb{R}^{I_1\times R}$ is a full-rank matrix and ${\bf p}\in\mathbb{N}^R$ is a set of distinct indices. The interpolatory projector $\mathcal{P}$ is an oblique projector onto the range of ${\bf U}$, which is defined as follows
 \begin{eqnarray}
  \mathcal{P}={\bf U}\left({\bf P}^T{\bf U}\right)^{-1}{\bf P}^T   
 \end{eqnarray}
 where ${\bf P}={\bf I}(:,{\bf p})$ and ${\bf I}$ is the identity matrix of size $I_1\times I_1$.
\end{defn}
Let ${\bf y}=\mathcal{P}{\bf x},$ then as shown in \cite{sorensen2016deim}, an important property of the operator $\mathcal{P}$ is that it preserves the elements of ${\bf x}$ with the indices ${\bf p}$, i.e. 
\begin{eqnarray}
{\bf y}({\bf p})={\bf P}^T{\bf y}={\bf P}^T{\bf U}\left({\bf P}^T{\bf U}\right)^{-1}{\bf P}^T{\bf x}={\bf x}({\bf p}).
\end{eqnarray}
This justifies the name of interpolation as the operator $\mathcal{P}$ can interpolate a vector ${\bf x}$ in the index set ${\bf p}$.

The DEIM algorithm iteratively samples the columns/rows according to the columns of a given matrix basis. This method is summarized in Algorithm \ref{ALG:DEIM}. For a given data matrix $\X\in\mathbb{R}^{I_1\times I_2}$, the DEIM algorithm uses a basis $\U=[{\bf u}_1,{\bf u}_2,\ldots,{\bf u}_R]\in\mathbb{R}^{I_1\times R}$ to sample the indices of important rows of $\X$. It first starts from the first column ${\bf u}_1$ and selects the index of an element with maximum absolute values, that is 
\[
{\bf u}_1(p_1)=\|{\bf u}_1\|_{\infty},
\]
and set ${\bf p}=[p_1]$. Then, a new index, $p_2$, is selected by first computing the residual 
\[
{\bf r}_1={\bf u}_2-\mathcal{P}_1{\bf u_1},
\]
where $\mathcal{P}_1={\bf u}_1\left({\bf P}_1^T{\bf u}_1\right)^{-1}{\bf P}_1^T$ is the interpolatory projector for ${\bf p}$ onto the range of ${\bf u}_1$. Selecting an index of ${\bf r}_1$ with maximum absolute value, i.e. ${\bf r}_1(p_2)=\|{\bf r}_1\|_{\infty},$ and we update the index set as ${\bf p}=[p_1,p_2]$. This procedure is continued by eliminating the
direction of the so-called {\it interpolatory projection} in the former basis vectors from the next one
and again finding the index of the entry with the largest magnitude in the residual vector. To be more precise, assume that we have already sampled $(j-1)$ row indices as ${\bf p}_{j-1}=[p_1,p_2,\ldots,p_{j-1}]$ and we need to select the $j$-th row index. The residual term ${\bf r}_{j-1}$ computed as
\begin{eqnarray}
{\bf r}_j={\bf u}_j-\mathcal{P}_{j-1}{\bf u}_{j}.  
\end{eqnarray}
where 
%the interpolatory projector as 
\begin{eqnarray}
\mathcal{P}_{j-1}&=& {\bf U}_{j-1}\left({\bf P}_{j-1}^T{\bf U}_{j-1}\right)^{-1}{\bf P}_{j-1}^T,\\
{\bf U}_{j-1}&=&[{\bf u}_1,{\bf u}_2,\ldots,{\bf u}_{j-1}],\\
{\bf P}_{j-1}&=&{\bf I}(:,{\bf p}_{j-1}).
\end{eqnarray}
Now, the $j$-th index, which is chosen as ${\bf r}_j(p_j)=\|{\bf r}_j\|_{\infty}.$ It is observed that the DEIM sampling method requires that the matrix ${\bf P}_{j-1}\U_{j-1}$ to be nonsingular at each iteration. {This is demonstrated in \cite{sorensen2016deim} under the condition that U is of full rank, which was our supposition}
%It is proved in \cite{sorensen2016deim}, that this is the case provided that $\U$ is of full rank, which was our assumption. 
Note that the DEIM method is basis dependent but for two different bases, $\Q$ and $\U$, that ${\rm Range}(\U)={\rm Range}(\Q)$, the DEIM approach provides the same indices \cite{chaturantabut2010nonlinear} 
\[
{\bf U}\left({\bf P}^T{\bf U}\right)^{-1}{\bf P}^T={\bf Q}\left({\bf P}^T{\bf Q}\right)^{-1}{\bf P}^T,
\]
%see \cite{chaturantabut2010nonlinear} for details.
%In the next lemma we assume that $\tS$ is the selection matrix corresponding to the row sampling obtained by staking the standard unit vectors $e_i$ or $\tS=[e_{i_1},e_{i_2},\ldots,e_{i_p}]$  for the index set ${\bf p}\in\mathbb{R}^k$ . 
\begin{rem} \cite{sorensen2016deim}
At the iteration $j$, we have ${\bf r}_{j-1}({\bf p}_{j-1})=0,$ because ${\bf P}_{j-1}{\bf u}_j$ matches ${\bf u}_j$ in the indices ${\bf p}_{j-1}$. This guarantees that each iteration samples distinct indices.  
\end{rem}
The error bound of the approximation obtained by the DEIM is presented in the next lemma.
\begin{lem}\label{Soren_lem}\cite{sorensen2016deim,saibaba2016hoid}
Assume ${\bf P}^T\U$ is invertible and let $\mathcal{P}$ be the interpolatory projector $\mathcal{P}=\U({\bf P}^T\U)^{-1}{\mathbf{P}}^T$. If
$\U^T\U = \I$, then any $\X\in\mathbb{R}^{I_1\times I_2}$ satisfies
\begin{eqnarray}
 \|\X-\mathcal{P}\X\|^2_F\leq \|({\bf P}^T\U)^{-1}\|^2_2\|(\I-\U\U^T)\X\|^2_F, 
\end{eqnarray}
Additionally, if $\U$ consists of the $R$ leading left singular vectors of $\X$, then
\begin{eqnarray}
    \|\X-\mathcal{P}\X\|^2_F\leq\|({\bf P}^T\U)^{-1}\|^2_2\|(\I-\U\U^T)\X\|^2_F\leq\|({\bf P}^T\U)^{-1}\|^2_2\sum_{t>R}\sigma^2_{t}.
\end{eqnarray}
\end{lem}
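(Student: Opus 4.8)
The plan is to reduce the oblique-projection error $\|\X-\mathcal{P}\X\|_F$ to the orthogonal-projection (truncation) error $\|(\I-\U\U^T)\X\|_F$, and then control the amplification factor by $\|({\bf P}^T\U)^{-1}\|_2$. First I would record the two elementary facts that drive the whole argument. Since $\U^T\U=\I$, the matrix $\U\U^T$ is the \emph{orthogonal} projector onto $\mathrm{Range}(\U)$. And since
\[
\mathcal{P}\U=\U({\bf P}^T\U)^{-1}{\bf P}^T\U=\U,
\]
the oblique projector $\mathcal{P}$ acts as the identity on $\mathrm{Range}(\U)$; in particular $\mathcal{P}(\U\U^T\X)=\U\U^T\X$.

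Next I would split $\X=\U\U^T\X+(\I-\U\U^T)\X$ and apply $(\I-\mathcal{P})$. The first summand is annihilated because $\mathcal{P}$ fixes it, leaving the key identity
\[
\X-\mathcal{P}\X=(\I-\mathcal{P})(\I-\U\U^T)\X.
\]
Taking Frobenius norms and using the submultiplicative bound $\|\A\B\|_F\le\|\A\|_2\|\B\|_F$ then gives $\|\X-\mathcal{P}\X\|_F\le\|\I-\mathcal{P}\|_2\,\|(\I-\U\U^T)\X\|_F$. It remains to show $\|\I-\mathcal{P}\|_2\le\|({\bf P}^T\U)^{-1}\|_2$. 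Here I would invoke the projector-norm identity (Szyld/Ljance): for any oblique projector with $\mathcal{P}\neq{\bf 0},\I$ one has $\|\I-\mathcal{P}\|_2=\|\mathcal{P}\|_2$ (the degenerate cases $\mathcal{P}={\bf 0}$ or $\mathcal{P}=\I$ make the error trivially zero). Then, because $\U$ has orthonormal columns and ${\bf P}$ is a column selection of the identity, so $\|\U\|_2=\|{\bf P}\|_2=1$, submultiplicativity yields
\[
\|\mathcal{P}\|_2=\|\U({\bf P}^T\U)^{-1}{\bf P}^T\|_2\le\|({\bf P}^T\U)^{-1}\|_2.
\]
Chaining these estimates and squaring proves the first inequality.

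The step I expect to be the real crux is the projector-norm identity $\|\I-\mathcal{P}\|_2=\|\mathcal{P}\|_2$. A naive Pythagorean estimate, observing that $(\I-\U\U^T)\X$ has columns orthogonal to $\mathrm{Range}(\U)$ so that $(\I-\mathcal{P})(\I-\U\U^T)\X$ splits orthogonally, only delivers the weaker factor $1+\|({\bf P}^T\U)^{-1}\|_2^2$; since $\|({\bf P}^T\U)^{-1}\|_2\ge1$ this is within a factor of two but does not reproduce the clean constant. Obtaining exactly $\|({\bf P}^T\U)^{-1}\|_2^2$ genuinely relies on the equality of the norms of $\mathcal{P}$ and $\I-\mathcal{P}$, which I would either cite or establish via the singular-value characterization of complementary oblique projectors.

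Finally, the second statement follows from the first by specializing $\U$ to the $R$ leading left singular vectors of $\X$. In that case $\U\U^T\X$ is precisely the rank-$R$ truncated SVD of $\X$, so by the Eckart--Young theorem $\|(\I-\U\U^T)\X\|_F^2=\sum_{t>R}\sigma^2_t$. Substituting this equality into the first bound immediately closes the argument and yields the stated two-sided chain.
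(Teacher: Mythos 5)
Your proof is correct, and it is essentially the argument of the reference this paper cites for the lemma: the paper itself states Lemma~\ref{Soren_lem} without proof, importing it from \cite{sorensen2016deim,saibaba2016hoid}, and your route --- splitting $\X=\U\U^T\X+(\I-\U\U^T)\X$, using $\mathcal{P}\U=\U$ to get $\X-\mathcal{P}\X=(\I-\mathcal{P})(\I-\U\U^T)\X$, invoking the oblique-projector identity $\|\I-\mathcal{P}\|_2=\|\mathcal{P}\|_2$, and bounding $\|\mathcal{P}\|_2\leq\|({\bf P}^T\U)^{-1}\|_2$ --- is exactly the Sorensen--Embree proof, with the second claim following from the SVD tail formula as you state. (One trivial remark: the case $\mathcal{P}={\bf 0}$ cannot occur at all, since $\mathcal{P}\U=\U\neq{\bf 0}$, so only $\mathcal{P}=\I$ needs the separate trivial treatment.)
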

\
The same result can be stated for the column selection process as follows
\begin{eqnarray}
 \|\X-\X\mathcal{W}\|^2_F\leq \|(\V^T\Q)^{-1}\|^2_2\|\X(\I-\V\V^T)\|^2_F\leq\|(\V^T\Q)^{-1}\|^2_2\sum_{t>R}\sigma^2_{t},
\end{eqnarray}
where $\mathcal{W}=\V(\Q^T\V)^{-1}\Q^T$ and $\Q=[{\bf e}_{q_1},{\bf e}_{q_2},\ldots,{\bf e}_{q_j}]$ is a collection of standard unit vectors corresponding to the row index set ${\bf q}=[q_1,q_2,\ldots,q_j]$ and ${\bf V}$ is a basis for the row space of the matrix $\X$. We see that the following quantities 
\begin{eqnarray}\label{errconst}
 \eta_p=\|(\tP^T\U)^{-1}\|^2_2, \quad \eta_q=\|(\V^T\Q)^{-1}\|^2_2,   
\end{eqnarray}
play important roles in the upper error bounds. So, the
conditioning of the problem heavily depends on these quantities and we are interested in sampling algorithms with these quantities being as small as possible. For the upper bounds of the mentioned quantities, see \cite{sorensen2016deim}. The next lemma demonstrates the upper bound on a CUR approximation obtained by DEIM for the row/column selection.  
\begin{lem}\label{LemSoren}\cite{sorensen2016deim}
Suppose $\X\in\mathbb{R}^{I_1\times I_2}$ that for the row and column indices ${\bf p}$ and ${\bf q}$, the matrices $\C=\X(:,{\bf q})=\X\Q$
and $\R=\X({\bf p},:)=\tP\X$, are full-rank where
\[
{\bf P}=[{\bf e}_{p_1},{\bf e}_{p_2},\ldots,{\bf e}_{p_j}],\quad\Q=[{\bf e}_{q_1},{\bf e}_{q_2},\ldots,{\bf e}_{q_j}],
\]
with finite error constants $\eta_p$ and $\eta_q$, defined in \eqref{errconst} and set $\U={\C}^{\dag}\X\R^{\dag}$, where $1\leq R<\min(I_1,I_2)$. Then
\begin{eqnarray}
    \|\X-\C\U\R\|^2_2\leq (\eta_p+\eta_q)\sigma^2_{R+1}.
\end{eqnarray} 
\end{lem}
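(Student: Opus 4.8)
The plan is to exploit that the optimal core $\U=\C^{\dag}\X\R^{\dag}$ turns $\C\U\R$ into a two-sided \emph{orthogonal} projection of $\X$, and then to control each one-sided error by the DEIM interpolatory bound of Lemma~\ref{Soren_lem}. Write $\Pi_{\C}=\C\C^{\dag}$ and $\Pi_{\R}=\R^{\dag}\R$ for the orthogonal projectors onto the column space of $\C$ and the row space of $\R$; the chosen core gives $\C\U\R=\C\C^{\dag}\X\R^{\dag}\R=\Pi_{\C}\X\Pi_{\R}$. First I would split
\begin{eqnarray*}
\X-\C\U\R=(\I-\Pi_{\C})\X+\Pi_{\C}\X(\I-\Pi_{\R}).
\end{eqnarray*}
The first summand has its columns in $\mathrm{Range}(\C)^{\perp}$ and the second has its columns in $\mathrm{Range}(\C)$; since these subspaces are orthogonal, applying both summands to an arbitrary unit vector and using $\|\Pi_{\C}\|_2\le 1$ gives the Pythagorean-type estimate
\begin{eqnarray*}
\|\X-\C\U\R\|_2^2\le\|(\I-\Pi_{\C})\X\|_2^2+\|\X(\I-\Pi_{\R})\|_2^2.
\end{eqnarray*}
(Using triangle inequality instead would be weaker; the additive-square form is what produces the clean $(\eta_p+\eta_q)$ factor.)

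Next I would bound each term by exhibiting a competitor whose row (resp. column) space already lies in the row space of $\R$ (resp. the column space of $\C$). For the row term, set ${\bf M}=\mathcal{P}\X=\U(\tP^T\U)^{-1}\tP^T\X=\U(\tP^T\U)^{-1}\R$, so that $\mathrm{Row}({\bf M})\subseteq\mathrm{Row}(\R)$; then ${\bf M}\Pi_{\R}={\bf M}$, hence $\X(\I-\Pi_{\R})=(\X-{\bf M})(\I-\Pi_{\R})$ and $\|\X(\I-\Pi_{\R})\|_2\le\|\X-\mathcal{P}\X\|_2$. The spectral-norm analogue of Lemma~\ref{Soren_lem}, obtained from the identity $\I-\mathcal{P}=(\I-\mathcal{P})(\I-\U\U^T)$ (since $\mathcal{P}\U=\U$) together with the standard fact $\|\I-\mathcal{P}\|_2=\|(\tP^T\U)^{-1}\|_2$, bounds this by $\|(\tP^T\U)^{-1}\|_2\,\|(\I-\U\U^T)\X\|_2=\eta_p^{1/2}\sigma_{R+1}$, because $\|(\I-\U\U^T)\X\|_2=\sigma_{R+1}$ when $\U$ collects the $R$ leading left singular vectors. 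Thus $\|\X(\I-\Pi_{\R})\|_2^2\le\eta_p\,\sigma_{R+1}^2$.

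For the column term I would run the mirror argument on $\X^T$, whose leading left singular vectors are exactly the $\V$ used to select the column indices. Taking ${\bf M}'=\C(\V^T\Q)^{-1}\V^T$ we have $\mathrm{Range}({\bf M}')\subseteq\mathrm{Range}(\C)$, so $(\I-\Pi_{\C}){\bf M}'=0$ and therefore $(\I-\Pi_{\C})\X=(\I-\Pi_{\C})(\X-{\bf M}')$, giving $\|(\I-\Pi_{\C})\X\|_2\le\|\X-{\bf M}'\|_2=\|\X^T-\mathcal{P}'\X^T\|_2$ with $\mathcal{P}'=\V(\Q^T\V)^{-1}\Q^T$. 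The same spectral estimate applied to $\X^T$ bounds this by $\|(\Q^T\V)^{-1}\|_2\,\sigma_{R+1}=\eta_q^{1/2}\sigma_{R+1}$, using $\|(\Q^T\V)^{-1}\|_2=\|(\V^T\Q)^{-1}\|_2$ and $\sigma_{R+1}(\X^T)=\sigma_{R+1}(\X)$. Hence $\|(\I-\Pi_{\C})\X\|_2^2\le\eta_q\,\sigma_{R+1}^2$, and substituting both estimates into the Pythagorean bound yields $\|\X-\C\U\R\|_2^2\le(\eta_p+\eta_q)\,\sigma_{R+1}^2$.

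I expect the main obstacle to be the bridge between the two worlds: the DEIM error bound of Lemma~\ref{Soren_lem} naturally lives on the singular subspaces $\mathrm{Range}(\U)$ and $\mathrm{Range}(\V)$, whereas the CUR error is governed by the orthogonal projectors onto the \emph{selected} columns and rows. The key realizations that make the two match are (i) $\mathrm{Row}(\mathcal{P}\X)\subseteq\mathrm{Row}(\R)$ because $\mathcal{P}\X=\U(\tP^T\U)^{-1}\R$, which lets the interpolatory approximation serve as an admissible competitor for the \emph{optimal} orthogonal projection, together with its transpose-dual statement for the columns; and (ii) upgrading Lemma~\ref{Soren_lem} from the Frobenius norm and $\sum_{t>R}\sigma_t^2$ to the spectral norm and the single term $\sigma_{R+1}^2$, which is where the orthonormality of $\U,\V$ and the projector-norm identity are used.
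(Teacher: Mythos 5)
Your proof is correct, but note that the paper itself offers no proof of this lemma: it is quoted (with adapted notation) from Sorensen and Embree \cite{sorensen2016deim}, so the only proof to compare against is the original one. Your argument follows the same skeleton as theirs --- the splitting $\X-\C\U\R=(\I-\C\C^{\dag})\X+\C\C^{\dag}\X(\I-\R^{\dag}\R)$, the competitor trick (that $(\I-\C\C^{\dag})\X=(\I-\C\C^{\dag})(\X-{\bf M}')$ for any ${\bf M}'$ with range inside ${\rm Range}(\C)$, and its row-space analogue), and the projector identities $(\I-\mathcal{P})=(\I-\mathcal{P})(\I-\U\U^T)$ and $\|\I-\mathcal{P}\|_2=\|(\tP^T\U)^{-1}\|_2$ --- but it departs in one genuine and, here, essential way: where Sorensen--Embree combine the two summands by the triangle inequality, obtaining $\|\X-\C\U\R\|_2\le\bigl(\|(\tP^T\U)^{-1}\|_2+\|(\V^T\Q)^{-1}\|_2\bigr)\sigma_{R+1}$, you exploit that the two summands have mutually orthogonal column spaces to get the additive-square (Pythagorean) bound. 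That sharper combination is exactly what the present paper's statement requires, because here $\eta_p,\eta_q$ are defined in \eqref{errconst} as \emph{squared} norms, so the claimed bound $(\eta_p+\eta_q)\sigma^2_{R+1}$ is strictly stronger than the square of the Sorensen--Embree inequality, which would carry the extra cross term $2\,\eta_p^{1/2}\eta_q^{1/2}\sigma^2_{R+1}$. In other words, your Pythagorean step not only works, it closes the small gap between the cited theorem and the squared form in which this paper restates it. The one hypothesis you invoke that the lemma leaves implicit --- that $\U$ and $\V$ are the $R$ leading left and right singular vectors of $\X$ --- is likewise implicit in the original theorem and is unavoidable if $\sigma_{R+1}$ is to appear in the bound.
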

Using the sub-multiplicative property of the Frobenius norm, we have
\begin{eqnarray}
    \|\X-\C\U\R\|^2_F\leq (\eta_p+\eta_q)\sum_{t>R}\sigma^2_{t}.
\end{eqnarray}  
\section{Tubal Discrete Empirical Interpolation Method (TDEIM) for lateral/horizontal slice sampling}\label{deimt}
It is empirically shown in \cite{sorensen2016deim}, that the DEIM algorithm outperforms the leverage-score sampling method 
%which is known 
as one of the best sampling approaches.
%, in many scenarios. 
This motivates us to generalize it to the tensor case based on the t-product. 
In this section, we discuss how to perform this generalization properly. A link between the tubal DEIM and the tubal leverage score sampling is also studied. 
We we call the extended methods as the tubal DEIM (TDEIM). Similar to the DEIM, a key concept of the TDEIM is the {\it interpolatory projector} that we now define it. For a given set of $R$ indices ${\bf s}\in\mathbb{N}^R$, a full tubal-rank tensor\footnote{A tensor with linear independent lateral slices, for example, the tensor $\cU$ obtained from the t-SVD can be used.} $\cU\in\mathbb{R}^{I_1\times R\times I_3},$ consider $\underline{\bf S}=\underline{\bf I}(:,{\bf s},:)\in\mathbb{R}^{I_1\times R\times I_3}$, as the selection tensor where $\underline{\bf I}\in\mathbb{R}^{I_1\times I_1\times I_3}$ is an identity tensor. The tensorial oblique projection operator is defined as follows
\begin{eqnarray}\label{proj}
\underline{\mathcal{P}}=\underline{\bf U}*(\underline{\bf{S}}^T*\underline{\bf{U}})^{-1}*\underline{\bf{S}}^T. 
\end{eqnarray}

\begin{defn}
{
({\bf Horizontal slice sampling}) Let $\cX\in\mathbb{R}^{I_1\times I_2\times I_3}$ be a given tensor and ${\bf s}\in\mathbb{N}^R$ be an index set. The subtensor $\cX({\bf s},:,:)\in\mathbb{R}^{R\times I_2\times I_3}$ that collects some horizontal slices of the tensor $\cX$ is refered to as the horizontal slice sampling tensor.
Assume $\underline{\bf S}=\underline{\bf I}(:,{\bf s},:)\in\mathbb{R}^{I_1\times R\times I_3}$, then the horizontal slice sampling is equivalent to $\cX({\bf s},:,:)=\underline{\bf S}^T*\cX.$ }
\end{defn}
Given an arbitrary tensor $\underline{\bf G}\in\mathbb{R}^{I_1\times I_2\times I_3}$ and $\cX=\underline{\mathcal{P}}*\underline{\bf G}$, we have  
\begin{eqnarray}
\nonumber
\cX({\bf s},:,:)&=&\underline{\bf S}^T*\cX=
\underline{\bf S}^T*\underline{\bf U}*(\underline{\bf{S}}^T*\underline{\bf{U}})^{-1}*\underline{\bf{S}}^T*\underline{\bf G}\\
&=&
\underline{\bf{S}}^T*\underline{\bf G}=\underline{\bf G}({\bf s},:,:).
\end{eqnarray}
This means that the projection operator $\underline{\mathcal{P}}$ preserves the horizontal slices of $\underline{\bf G}$ specified by the index set ${\bf s}$.

{Let us describe the TDEIM for sampling horizontal slices of a given tensor. The TDEIM begins by the first lateral slice of the basis tensor, i.e. $\cU(:, 1,:)$, and select the index with the highest Euclidean norm among all its tubes, e.g. $
s_1=\arg\max_{1\leq i \leq I_1}\|\underline{\bf U}(i,1,:)\|$. The index of the first sampled horizontal slice will be $s_1$.
}
%The TDEIM starts with selecting an index with the maximum Euclidean norm of the tubes of the first lateral slice of the basis tensor, i.e, $\cU(:,1,:),$ to be the index of the first sampled horizontal slice. 
The subsequent indices are selected according to the indices with the maximum Euclidean norm of the tubes of the residual lateral slice that is computed by removing the direction of the interpolatory projection in the previous basis horizontal slice from the subsequent one. To be more specific, let the selected indices be ${\bf s}_{j-1}=\{s_1,s_2,\ldots,s_{j-1}\}$ and we want to select the new index $s_j$. To do so, we compute the residual slice
\[
\underline{\bf R}(:,j,:)=\underline{\bf U}(:,j,:)-\underline{\mathcal{P}}_{j-1}*\underline{\bf U}(:,j,:),
\]
where $\underline{\mathcal{P}}_{j-1}=\underline{\bf U}^{j-1}*({\underline{\bf S}^{j-1}}^T*\underline{{\bf U}}^{j-1})^{-1}*{\underline{\bf S}^{j-1}}^T,\,\underline{\bf S}^{j-1}=\underline{\bf I}(:,{\bf s}_{j-1},:),$ and $\underline{\bf U}^{j-1}=\underline{\bf U}(:,{\bf s}_{j-1},:)$. Then, a new sampled horizontal slice index $s_j$ with the maximum Euclidean norm of tubes is computed or
\[
s_j=\arg\max_{1\leq i \leq I_1}\|\underline{\bf R}(i,j,:)\|_2.
\]
The same process can be carried out to select lateral slices where the basis tensor $\cV$ should be used. The TDEIM method for sampling horizontal slices is summarized in Algorithm \ref{ALG:DEIM_ten}. With a slight modification of Algorithm \ref{ALG:DEIM_ten}, it can be used for lateral slice sampling. Similar to Lemma \ref{Soren_lem}, the next theorem demonstrates the error bound of the approximation yielded by the TDEIM method. 

\begin{lem}\label{lem_tub}
Assume $\underline{\mathcal{P}}=\cU*(\cS^T*\cU)^{-1}*\cS^T$ and ${\underline{\bf S}}^T*\cU$ is invertible. If $\cU^T*\cU=\cI,$ then any tensor $\cX\in\mathbb{R}^{I_1\times I_2\times I_3}$ satisfies 
\begin{eqnarray}
\|\cX-\underline{\mathcal{P}}*\cX\|^2_F\leq\frac{1}{I_3}\max_{i}\,\left({\|(\widehat{\cS}^T_i\widehat{\cU}_i)^{-1}\|_2^2}\right)\sum_{i=1}^{I_3}\|(\I-\widehat{\cU}_i\widehat{\cU}_i^T)\widehat{\X}_i\|_F^2,  
\end{eqnarray}
where $\widehat{\cX}={\rm fft}(\cX,[],3),\,\widehat{\cU}={\rm fft}(\cU,[],3)$ and $\widehat{\cS}={\rm fft}(\cS,[],3)$. Here, $\cX_i=\cX(:,:,i),\,\underline{\mathbf S}_i=\underline{\mathbf S}(:,:,i)$ and $\cU_i=\cU(:,:,i)$. Moreover, if $\cU$ contains the $R$ leading left singular lateral slices of the tensor $\cX$, then  
\begin{eqnarray}
\|\cX-\underline{\mathcal{P}}*\cX\|^2_F\leq\frac{1}{I_3}\max_{i}\,\left({\|(\widehat{\cS}^T_i\widehat{\cU}_i)^{-1}\|_2^2}\right)\sum_{i=1}^{I_3}\sum_{t>R}^{}{(\sigma^i_{t})^2},  
\end{eqnarray}
where $\sigma^i_{t}$ are the $t$-th largest singular values of the frontal slice $\widehat{\cX}_i=\widehat{\cX}(:,:,i)$.
\end{lem}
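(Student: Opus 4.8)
The plan is to reduce the tensorial statement to a collection of independent matrix statements in the Fourier domain and then invoke the matrix DEIM bound of Lemma \ref{Soren_lem} frontal-slice by frontal-slice. The starting observation is that under the DFT along the third mode the t-product block-diagonalizes: for conforming tensors $\widehat{(\cA*\cB)}_i=\widehat{\cA}_i\,\widehat{\cB}_i$, the tensor transpose becomes the (conjugate) transpose of each frontal slice, and the tensor inverse becomes the slicewise inverse. Writing $\mathcal{P}_i:=\widehat{\cU}_i(\widehat{\cS}_i^T\widehat{\cU}_i)^{-1}\widehat{\cS}_i^T$, it then follows that the $i$-th frontal slice of the residual satisfies $\widehat{(\cX-\underline{\mathcal{P}}*\cX)}_i=(\I-\mathcal{P}_i)\widehat{\cX}_i$. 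The hypothesis that $\cS^T*\cU$ is invertible is equivalent, via block-diagonalization, to each $\widehat{\cS}_i^T\widehat{\cU}_i$ being invertible, so every $\mathcal{P}_i$ is a well-defined oblique (interpolatory) projector.

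First I would apply the Parseval-type identity \eqref{eq_fou} to split the global error into a sum of slicewise errors,
\[
\|\cX-\underline{\mathcal{P}}*\cX\|_F^2=\frac{1}{I_3}\sum_{i=1}^{I_3}\|(\I-\mathcal{P}_i)\widehat{\cX}_i\|_F^2.
\]
Next I would verify that the hypotheses of Lemma \ref{Soren_lem} hold for each frontal slice: the relation $\cU^T*\cU=\cI$ transfers under the DFT to $\widehat{\cU}_i^T\widehat{\cU}_i=\I$ for every $i$ (each $\widehat{\cU}_i$ has orthonormal columns), and $\widehat{\cS}_i$ is itself a column selector coming from the identity tensor. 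Thus Lemma \ref{Soren_lem} applies verbatim to $\widehat{\cX}_i$ with basis $\widehat{\cU}_i$ and selection $\widehat{\cS}_i$, yielding
\[
\|(\I-\mathcal{P}_i)\widehat{\cX}_i\|_F^2\leq\|(\widehat{\cS}_i^T\widehat{\cU}_i)^{-1}\|_2^2\,\|(\I-\widehat{\cU}_i\widehat{\cU}_i^T)\widehat{\cX}_i\|_F^2.
\]
Summing over $i$, bounding each spectral-norm factor by its maximum over $i$, and pulling that maximum outside the sum produces the first claimed inequality.

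For the second inequality I would use that when $\cU$ holds the $R$ leading left singular lateral slices of $\cX$, Algorithm \ref{ALG:Tsvd} shows that in the Fourier domain $\widehat{\cU}_i$ holds exactly the $R$ leading left singular vectors of $\widehat{\cX}_i$. Hence $\widehat{\cU}_i\widehat{\cU}_i^T\widehat{\cX}_i$ is the best rank-$R$ approximation of $\widehat{\cX}_i$, so the orthogonal residual equals the tail of squared singular values, $\|(\I-\widehat{\cU}_i\widehat{\cU}_i^T)\widehat{\cX}_i\|_F^2=\sum_{t>R}(\sigma^i_t)^2$. Substituting this into the first bound gives the second bound.

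The main obstacle I anticipate is bookkeeping the transpose convention. Because the slices $\widehat{\cX}_i,\widehat{\cU}_i,\widehat{\cS}_i$ are complex, the tensor transpose $\cU^T$ corresponds to the \emph{conjugate} transpose $\widehat{\cU}_i^{H}$ in the Fourier domain rather than the plain transpose, so the orthonormality relation, the projector $\mathcal{P}_i$, and the rank-$R$ optimality argument must all be read with $\widehat{\cU}_i^H$; reconciling this with the $(\cdot)^T$ notation used in the statement, and checking the conjugate-symmetry of the slices for indices $i>\lceil(I_3+1)/2\rceil$ so that the real-tensor structure is respected, is the one place requiring genuine care. Everything else is a direct slicewise transcription of the matrix DEIM argument combined with the Parseval identity.
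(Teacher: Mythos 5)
Your proposal is correct and follows essentially the same route as the paper's own proof: the Parseval-type identity \eqref{eq_fou} reduces the error to a sum over frontal slices in the Fourier domain, the matrix DEIM bound of Lemma \ref{Soren_lem} is applied slice by slice, the maximum of the factors $\|(\widehat{\cS}_i^T\widehat{\cU}_i)^{-1}\|_2^2$ is pulled out of the sum, and the second bound follows by substituting the singular-value tail for each $\|(\I-\widehat{\cU}_i\widehat{\cU}_i^T)\widehat{\cX}_i\|_F^2$. If anything, you are more careful than the paper, which does not explicitly verify that the hypotheses of Lemma \ref{Soren_lem} transfer slicewise and silently writes $\widehat{\cU}_i^T$ where the conjugate transpose of the complex Fourier-domain slices is meant.
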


\begin{proof}
From identity \eqref{eq_fou}, we have 
\begin{eqnarray}
\|\cX-\underline{\mathcal P}*\cX\|_F^2=\frac{1}{I_3}\sum_{i=1}^{I_3}\|\widehat{\cX}_i-\widehat{\underline{\mathcal P}}_i\widehat{\cX}_i\|_F^2,    
\end{eqnarray}
where $\widehat{\underline{\mathcal P}}={\rm fft}({{\mathcal P}},[],3)$ and $\widehat{\underline{\mathcal P}}_i=\widehat{\underline{\mathcal P}}(:,:,i)$. Using Lemma \ref{Soren_lem}, we arrive at
\begin{eqnarray}
 \nonumber
 \|\cX-{\underline{\mathcal P}}*\cX\|_F^2&=&\frac{1}{I_3}\sum_{i=1}^{I_3}\|\widehat{\cX}_i-\widehat{\underline{\mathcal P}}_i\widehat{\cX}_i\|_F^2\\
 \nonumber
 &\leq&
 \frac{1}{I_3}\sum_{i=1}^{I_3}\|(\widehat{\cS}_i^T\widehat{\cU}_i)^{-1}\|_2^2\|(\I-\widehat{\cU}_i\,\widehat{\cU}_i^T)\widehat{\cX}_i\|_F^2\\
 \nonumber
 &\leq&
 \frac{1}{I_3}\max\,\left({\|(\widehat{\cS}^T_i\widehat{\cU}_i)^{-1}\|_2^2}\right)\sum_{i=1}^{I_3}\|(\I-\widehat{\cU}_i\,\widehat{\cU}_i^T)\widehat{\cX}_i\|_F^2,
\end{eqnarray}
taking into account that $\widehat{\underline{\mathcal P}}_i=\widehat{\cU}_i(\widehat{\cS}^T_i\widehat{\cU}_i)^{-1}\widehat{\cS}^T_i$ where $\widehat{\cS}={\rm fft}(\cS,[],3)$ and $\widehat{\cS}_i=\widehat{\cS}(:,:,i)$. So, the proof of the first part is completed. It suffices to consider $\|(\cI-\widehat{\cU}_i\widehat{\cU}_i^T)\widehat{\X}_i\|_F^2=\sum_{t>R}^{}(\sigma_{t}^i)^2$ and substitute it into the last equation, to get the second part of the theorem.
\end{proof}

Similar to the matrix case, let us introduce two quantities as follows
\begin{eqnarray}\label{t_const}
 \tilde{\eta}_p=\frac{1}{I_3}\max_{i}\,\left({\|(\widehat{\cS}^T_i\widehat{\cU}_i)^{-1}\|_2^2}\right),\quad\tilde{\eta}_q=\frac{1}{I_3}\max_{i}\,\left({\|(\widehat{\cV}^T_i\widehat{\cQ}_i)^{-1}\|_2^2}\right), 
\end{eqnarray}
 that will be used in the next theorem. Here, $\cV\in\mathbb{R}^{I_2\times R\times I_3}$ is a tensor basis for the subspace of lateral slices of the original data tensor, $\underline{\bf Q}=\underline{\bf I}(:,{\bf s}_{j-1},:)$ is a tensor of some sampled lateral slices of the identity tensor specified by the index set ${\bf s}_{j-1}=\{s_1,s_2,\ldots,s_{j-1}\},\widehat{\cQ}={\rm fft}(\cQ,[],3),\,\widehat{\cV}={\rm fft}(\cV,[],3)$ and $\widehat{\cQ}_i=\widehat{\cQ}(:,:,i),\,\widehat{\cV}_i=\widehat{\cV}(:,:,i)$.   

\begin{thm}\label{thm_tub}
Suppose $\cX\in\mathbb{R}^{I_1\times I_2\times I_3}$ and $1\leq R<\min(I_1,I_2)$. Assume that  horizontal slice and lateral slice indices ${\bf p}$ and ${\bf q}$ give full tubal rank tensors $\cC=\cX(:,{\bf q},:)=\cX*\underline{\mathcal Q}$
and $\cR=\cX({\bf p},:,:)={\underline{\mathcal P}}*\cX$, where $\underline{\mathcal{P}}$ and $\underline{\mathcal{Q}}$ are tensorial interpolation projectors for horizontal and lateral slice sampling, respectively with corresponding finite error constants $\tilde{\eta}_p,\,\tilde{\eta}_q$ defined in \eqref{t_const} and set $\cU=\underline{\C}^{\dag}*\cX*\cR^{\dag}$. Then
\begin{eqnarray}
    \|\cX-\cC*\cU*\cR\|^2_F\leq (\tilde{\eta}_p+\tilde{\eta}_q)\sum_{i=1}^{I_3}\sum_{t>R}(\sigma^i_{t})^2,
\end{eqnarray} 
where $\sigma^i_{t}$ are the $t$-th largest singular values of the frontal slice $\widehat{\cX}_i=\widehat{\cX}(:,:,i)$.
\end{thm}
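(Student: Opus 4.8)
The plan is to reduce the tensor estimate to the matrix DEIM CUR bound of Lemma~\ref{LemSoren} by passing to the Fourier domain, where both the t-product and the t-pseudoinverse decouple into independent frontal-slice-wise matrix operations. First I would invoke identity~\eqref{eq_fou} together with the fact that the FFT along the third mode turns each t-product into a slice-wise matrix product, so that
\begin{eqnarray}
\|\cX-\cC*\cU*\cR\|_F^2=\frac{1}{I_3}\sum_{i=1}^{I_3}\|\widehat{\cX}_i-\widehat{\cC}_i\widehat{\cU}_i\widehat{\cR}_i\|_F^2,
\end{eqnarray}
where $\widehat{\cC}_i=\widehat{\cX}_i\widehat{\cQ}_i$, $\widehat{\cR}_i=\widehat{\cS}_i^T\widehat{\cX}_i$, and, since the t-pseudoinverse also acts slice-wise in the Fourier domain, $\widehat{\cU}_i=\widehat{\cC}_i^{\dag}\widehat{\cX}_i\widehat{\cR}_i^{\dag}$.

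The key structural observation I would make next is that the Fourier slices of the selection tensors remain genuine row/column selection matrices. Because $\cS=\cI(:,{\bf p},:)$ and $\cQ=\cI(:,{\bf q},:)$ have a single nonzero (first) frontal slice and all remaining frontal slices equal to zero, the DFT along the third mode simply reproduces that first slice in every Fourier slice, so that $\widehat{\cS}_i$ and $\widehat{\cQ}_i$ are the fixed selection matrices $\I(:,{\bf p})$ and $\I(:,{\bf q})$ for every $i$. Consequently, for each fixed $i$ the triple $(\widehat{\cX}_i,\widehat{\cC}_i,\widehat{\cR}_i)$ is precisely an instance of the matrix CUR construction of Lemma~\ref{LemSoren}, with middle matrix $\widehat{\cC}_i^{\dag}\widehat{\cX}_i\widehat{\cR}_i^{\dag}$, row/column selectors $\widehat{\cS}_i,\widehat{\cQ}_i$, and leading singular bases $\widehat{\cU}_i,\widehat{\cV}_i$ of $\widehat{\cX}_i$. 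The full-tubal-rank hypothesis on $\cC$ and $\cR$ ensures that $\widehat{\cC}_i$ and $\widehat{\cR}_i$ are full rank for every $i$, so the hypotheses of Lemma~\ref{LemSoren} are met slice by slice.

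With this reduction in place, I would apply the Frobenius form of Lemma~\ref{LemSoren} to each Fourier slice, obtaining
\begin{eqnarray}
\|\widehat{\cX}_i-\widehat{\cC}_i\widehat{\cU}_i\widehat{\cR}_i\|_F^2\leq\left(\eta_p^i+\eta_q^i\right)\sum_{t>R}(\sigma_t^i)^2,
\end{eqnarray}
where $\eta_p^i=\|(\widehat{\cS}_i^T\widehat{\cU}_i)^{-1}\|_2^2$ and $\eta_q^i=\|(\widehat{\cV}_i^T\widehat{\cQ}_i)^{-1}\|_2^2$ are the per-slice error constants and the $\sigma_t^i$ are the singular values of $\widehat{\cX}_i$. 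Summing over $i$, dividing by $I_3$, and bounding each per-slice constant by its maximum over $i$ then pulls the two quantities out of the sum and reproduces exactly the constants $\tilde{\eta}_p$ and $\tilde{\eta}_q$ of~\eqref{t_const}, which yields the asserted inequality.

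I expect the main obstacle to lie in the careful bookkeeping of the second paragraph, namely justifying that the t-pseudoinverse commutes with the FFT so that $\widehat{\cU}_i=\widehat{\cC}_i^{\dag}\widehat{\cX}_i\widehat{\cR}_i^{\dag}$ holds slice-wise, and verifying that the selection tensors Fourier-transform into the \emph{same} selection matrix in every frontal slice so that the matrix lemma applies verbatim for each $i$. Once this per-slice identification is secured, the remaining summation-and-maximization step is identical to the argument already used in the proof of Lemma~\ref{lem_tub}.
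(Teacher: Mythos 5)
Your proposal is correct and follows exactly the paper's route: apply identity \eqref{eq_fou} to reduce $\|\cX-\cC*\cU*\cR\|_F^2$ to a sum of Fourier-slice matrix CUR errors and then invoke Lemma \ref{LemSoren} slice by slice, pulling the per-slice constants out via the maxima that define $\tilde{\eta}_p$ and $\tilde{\eta}_q$. In fact, the paper's proof compresses everything after the Fourier decomposition into ``the result can be readily concluded from Lemma \ref{LemSoren},'' so your additional bookkeeping (slice-wise action of the t-pseudoinverse, the observation that $\widehat{\cS}_i$ and $\widehat{\cQ}_i$ equal the same real selection matrices for every $i$, and the final summation--maximization step) simply makes explicit what the paper leaves implicit.
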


\begin{proof}
Using identity \eqref{eq_fou}, we have
\begin{eqnarray}
\nonumber
\|\cX-\cC*\cU*\cR\|_F^2&=&\frac{1}{I_3}\sum_{i=1}^{I_3}\|\widehat{\cX}-\widehat{\cC}_i\,\widehat{\cU}_i\,\widehat{\cR}_i\|_F^2,
\end{eqnarray}
where $\widehat{\cC}={\rm fft}(\cC,[],3),\,\widehat{\cU}={\rm fft}(\cU,[],3),\,\widehat{\cR}={\rm fft}(\cR,[],3)$ and $\widehat{\cC}_i=\widehat{\cC}(:,:,i),\,\widehat{\cU}_i=\widehat{\cU}(:,:,i),\,\widehat{\cR}_i=\widehat{\cR}(:,:,i).$ Then, the result can be readily concluded from Lemma \ref{LemSoren}.   
\end{proof}

Theorem \ref{thm_tub} shows that the TDEIM approximation with the middle tensor defined above can provide approximation within a factor $\tilde{\eta}_p+\tilde{\eta}_q$ of the best tubal rank $R$ approximation. It also indicates that the conditioning of the problem depends on these two quantities and the lateral/horizontal slices should be selected in such a way that these quantities be controlled. In the simulation section \eqref{Sec:sim}, we will show that the proposed TDEIM provide lower values for the quantities $\tilde{\eta}_p$ and $\tilde{\eta}_q$ and they change smoothly. 

\RestyleAlgo{ruled}
\LinesNumbered
\begin{algorithm}
\SetKwInOut{Input}{Input}
\SetKwInOut{Output}{Output}\Input{ $\U\in\mathbb{R}^{I_1\times R}$ with $R\leq I_1$ (linearly independent columns)} 
\Output{Indices ${\bf s}\in\mathbb{N}^R$ with distinct entries in $\{1,2,\ldots,I_1\}$}
\caption{DEIM index selection for row selection \cite{sorensen2016deim}}\label{ALG:DEIM}
      {
      $\cu=\U(:,1)$;\\
$s_1=\arg\max_{1\leq i\leq I_1}|\cu_i|$\\
\For{$j=2,3,\ldots,R$}{
$\cu=\U(:,j)$;\\
$\cc=\U(s,1:j-1)^{-1}\cu(s)$;\\
$\rr=\cu-\U(:,1:j-1)\cc$;\\
$s_j=\arg\max_{1\leq i\leq I_1}|\rr_i|$
} 
       	}       	
\end{algorithm}

\RestyleAlgo{ruled}
\LinesNumbered
\begin{algorithm}
\SetKwInOut{Input}{Input}
\SetKwInOut{Output}{Output}\Input{ $\cU\in\mathbb{R}^{I_1\times R\times I_3}$ with $R\leq I_1$ (linearly independent lateral slices)} 
\Output{Indices ${\bf s}\in\mathbb{N}^R$ with distinct entries in $\{1,2,\ldots,I_1\}$}
\caption{Proposed Tubal DEIM (TDEIM) index selection approach for horizontal slice selection}\label{ALG:DEIM_ten}
      {
%      $\cU=\cU(:,1,:)$;\\
$s_1=\arg\max_{1\leq i\leq I_1}\|\cU(i,1,:)\|_2$\\
\For{$j=2,3,\ldots,R$}{
%$\cU=\cU(:,j,:)$;\\
$\underline{\C}=\cU({\bf s},1:j-1,:)^{-1}*\cU({\bf s},j,:)$;\\
$\underline\R=\cU(:,j,:)-\cU(:,1:j-1,:)*\underline{\C}$;\\
$s_j=\arg\max_{1\leq i\leq I_1}\|\underline{\R}(i,j,:)\|_2$
} 
       	}       	
\end{algorithm}

\subsection{Faster TDEIM Algorithm}\label{Subsec:HTDEM}
Although the TDEIM can be used to choose the indices of lateral and horizontal slices, its primary restriction is that the maximum number of indices that can be chosen must match the specified tubal rank $R$ of the tensor $\cX$. Using a larger $R$, we need to compute larger tensor basis $\underline{\bf U}\in\mathbb{R}^{I_1\times R\times I_3}$ and this requires higher computational complexity and memory resources, which makes the algorithm prohibitive for big data tensors. It is suggested in \cite{gidisu2022hybrid}, to combine the DEIM with the leverage scores approach to find more than $R$ column/row indices. Let $\X\in\mathbb{R}^{I_1\times I_2}$ be given. The idea is to start with a small rank $R$ where $R<R'<\min(m,n)$ and apply the DEIM to select $R$ rows of the matrix $\X$. Then, more $R'-R$ indices are selected according to the leverage scores of the residuals. Since $R<R'$, we only compute a smaller set of left singular vectors $R$ rather than $R'$, so, we can speed up the computations. We can straightforwardly generalize this idea to tensors and this method is summarized in Algorithm \ref{ALG:DEIM_tenlever}. Indeed, we need to only compute the left singular tensor $\underline{\bf U}$ of size $I_1\times R\times I_3$ rather than $I_1\times R'\times I_3$, and the rest of indices are sampled using the tubal leverage scorers of the residual tensor $\underline{\bf U}$ in Lines 5-7 of Algorithm \ref{ALG:DEIM_tenlever}.
\begin{rem}
The basis tensors $\cU$ and $\cV$ are required in Algorithms \ref{ALG:DEIM} and \ref{ALG:DEIM_ten} can be computed very fast through the randomized truncated t-SVD \cite{zhang2018randomized,ahmadi2022efficient,ahmadi2022randomized}. This version can be regarded as a randomized version of the TDEIM algorithm.
\end{rem}

\RestyleAlgo{ruled}
\LinesNumbered
\begin{algorithm}
\SetKwInOut{Input}{Input}
\SetKwInOut{Output}{Output}\Input{ $\cU\in\mathbb{R}^{I_1\times R\times I_3}$ and $\cV\in\mathbb{R}^{I_2 \times R \times I_3}$ with a target tubal rank $R$ with $R\leq R'\leq \min\{I_1,I_2\}$} 
\Output{Index set ${\bf s}\in\mathbb{N}^R$ and ${\bf p}\in\mathbb{N}^R$with distinct entries in $\{1,2,\ldots,I_1\}$ and $\{1,2,\ldots,I_2\}$ for the selected horizontal and lateral slices}
\caption{Proposed hybrid tubal Leverage Scores and TDEIM (HTDEIM)}\label{ALG:DEIM_tenlever}
      {
\For{$j=1,2,\ldots,R$}{
${\bf s}(j)=\max_{1\leq i\leq I_1}\|\cU(i,j,:)\|$;\\
$\cU=\cU-\cU({\bf s},1:j-1,:)^{-1}*\cU({\bf s},j,:)$;\\
}
Compute the tubal leverage scores $l_i=\|\cU(i,:,:)\|^2,\,\,\,i=1,2,\ldots,I_1,$ and sort $l$ in non-increasing order;\\
Delete components in $l$ corresponding to the indices in ${\bf s}$;\\
Sample $s'=R'-R$ indices corresponding to $R'-R$ largest entries of $l$;\\
${\bf s}=[{\bf s};{\bf s}']$;\\
Perform 1–8 on $\cV$ to get index set ${\bf p}$
       	}       	
\end{algorithm}

\section{Experiments}\label{Sec:sim}
This section presents numerical results to demonstrate the performance of our proposed algorithms. We have implemented and ran the proposed algorithm in MATLAB on a computer with 2.60 GHz Intel(R) Core(TM) i7-5600U processor and 8GB memory. 
%Some codes and examples are available at the GitHub repository 
%\url{https://github.com/SalmanAhmadi-Asl/Tubal_DEIM/upload}.
We consider four experiments. We use synthetic and optimization based tensors in the first simulation. The second and third examples are for image and video approximations, respectively. In the last experiment, we use the proposed tubal sampling approach to compute a light-weight model for the supervised classification task on the MNIST dataset.
We mainly compare the results achieved by the proposed TDEIM with three sampling algorithms as follows:
\begin{itemize}
    \item Top tubal leverage scores \cite{tarzanagh2018fast}
    \item Tubal leverage score sampling \cite{tarzanagh2018fast}
    \item Uniform sampling without replacement \cite{ahmadi2021cross}
\end{itemize}
The \textit{top leverage score} method first computes the tubal leverage scores of horizontal and lateral slices as described in Section \ref{Sec:tSVD} and then selects the indices corresponding to the $R$ top horizontal and lateral leverage scores. The \textit{Tubal leverage score sampling} method builds the probability distributions \eqref{pro} and samples the horizontal and lateral slices based on them. The \textit{Uniform sampling without replacement} applies the uniform sampling for selecting the horizontal and lateral slices. We perform this procedure without replacement as it is not required to select a horizontal or a lateral slice multiple times. It has been experimentally reported that uniform sampling without replacement works better than the one with replacement. We use the `` Absolute Frobenius Error'' metric defined as follow
\[
{\rm Error}=\|\cX-\cC*\cU*\cR\|_F,
\]
to compare the performance of the proposed sampling approach with the three considered baseline methods.
\begin{exa}\label{exa_1} ({\bf Synthetic and optimization based tensors}) 
In this experiment, we use synthetic and optimization based tensors in our simulations. First consider following synthetic data tensor of size $300\times 400\times 300$
\begin{eqnarray}\label{rand_ten}
\cX(i,j,k)=\frac{1}{(i^p+j^p+k^p)^{1/p}},\\
\nonumber
1\leq i,k\leq 300,\,1\leq j\leq 400.
\end{eqnarray}
We mainly use $p=3$ and $p=5$ in the simulations and top singular tensors tubal of tubal rank $R=15$ for building the tubal leverage scores and also as a basis for using TDEIM Algorithm. The tubal leverage scores corresponding to the data tensor \eqref{rand_ten} for $p=5$ and $p=3$ ar displayed in Figures \ref{Fig_1} and \ref{Fig_3}, respectively. Here, we also show the indices, which were selected using the TDEIM for horizontal and lateral slice selection. Th error achieved by the proposed and the three baselines are reported in Figure \ref{Fig_2}. As can be seen, the proposed algorithm provides lower errors and also is monotonically decreasing while the tubal rank is increasing.This experiment clearly shows that the proposed algorithm is robust and can achieve better results than the baseline sampling algorithms. We see that the TDEIM samples most of the indices with high tubal leverage scores but not all of them contrary to the top tubal leverage scores. This indicates that some indices with lower tubal leverage scores are also crucial for getting more accurate results. This experiment also shows that the uniform sampling or top tubal leverage scores could be unstable while the tubal leverage score sampling provided better results. Similar numerical results were reported in \cite{sorensen2016deim}. The error constants $\tilde{\eta}_p$ and $\tilde{\eta}_q$ for different tubal ranks are displayed in Figure \ref{eta}. This figure demonstrates the mentioned error constants are increasing smoothly, which leads to lower error bound and better conditioning.      

In the second set of experiments, we consider the {\it Exponential, Rastrigin, Booth, Matyas and Easom functions} as two-dimensional functions defined as follows
\begin{eqnarray}
 f({\bf x},{\bf y})&=&-\exp(-0.5({\bf x}^2+{\bf y}^2)),\\
  f({\bf x},{\bf y})&=&20+{\bf x}^2+{\bf y}^2-10(\cos(2\pi{\bf x})+\cos(2\pi{\bf y}))),\\
   f({\bf x},{\bf y})&=&({\bf x}+2{\bf y}-7)^2+(2{\bf x}+{\bf y}-5),\\
   f({\bf x},{\bf y})&=&0.26({\bf x}^2+{\bf y}^2)-0.48{\bf x}{\bf y},\\
   f({\bf x},{\bf y})&=&-\cos({\bf x})\cos({\bf y})\exp(-(({\bf x}-\pi)^2+({\bf y}-\pi)^2)).
\end{eqnarray}
respectively, which are widely used as baseline in optimization. We discritize the mentioned functions over the $[0,1000]\times[0,1000]$ to build matrices of size $1000\times 1000$ and then reshape them to third order tensors of size $100\times 100\times 100$. Here, we apply the proposed TDEIM algorithm and the baseline methods to the generated data tensors with the tubal rank $R=10$. The error achieved by the algorithms are shown in Table \ref{table_func}. In most of the cases the proposed TDEIM method provided the best results.

Let us now compare the running time of the HTDEIM (Algorithm \ref{ALG:DEIM_tenlever}) with the TDEIM (Algorithm \ref{ALG:DEIM_ten}). As we discussed in Section \eqref{Subsec:HTDEM}, the TDEIM requires the basis tensors $\cU$ and $\cV$ for a given tubal rank $R$ to select $R$ lateral and horizontal slices. For a large $R$, computing these data tensors could be expensive and to circumvent this issue, we can use smaller tubal rank $R'<R$ and use the TDEIM to sample $R'$ horizontal and lateral slices. The rest of slices $(R-R')$ are selected according to the top tubal leverage scores of the residual tensor (see subsection \ref{Subsec:HTDEM}). The running times of the HTDEIM and TDEIM algorithms for the data tensor \eqref{rand_ten} are compared in Table \ref{Table1}. It is seen that the HTDEIM algorithm requires less execution times compared to the TDEIM method. The errors achieved by the HTDEIM and TDEIM are compared in Table \ref{Table2}. So, the HTDEIM algorithm delivers quicker results that are comparable to those of the TDEIM approach.

%Let us now consider the fallowing new tensors which arise in optimization problems:

% The relative errors of the proposed algorithms for different functions and different sampling approaches are reported in Table \ref{ALG:DEIM_tenlever} for the tubal rank $R=$ 

\begin{figure}
\begin{center}
\includegraphics[width=0.5\columnwidth]{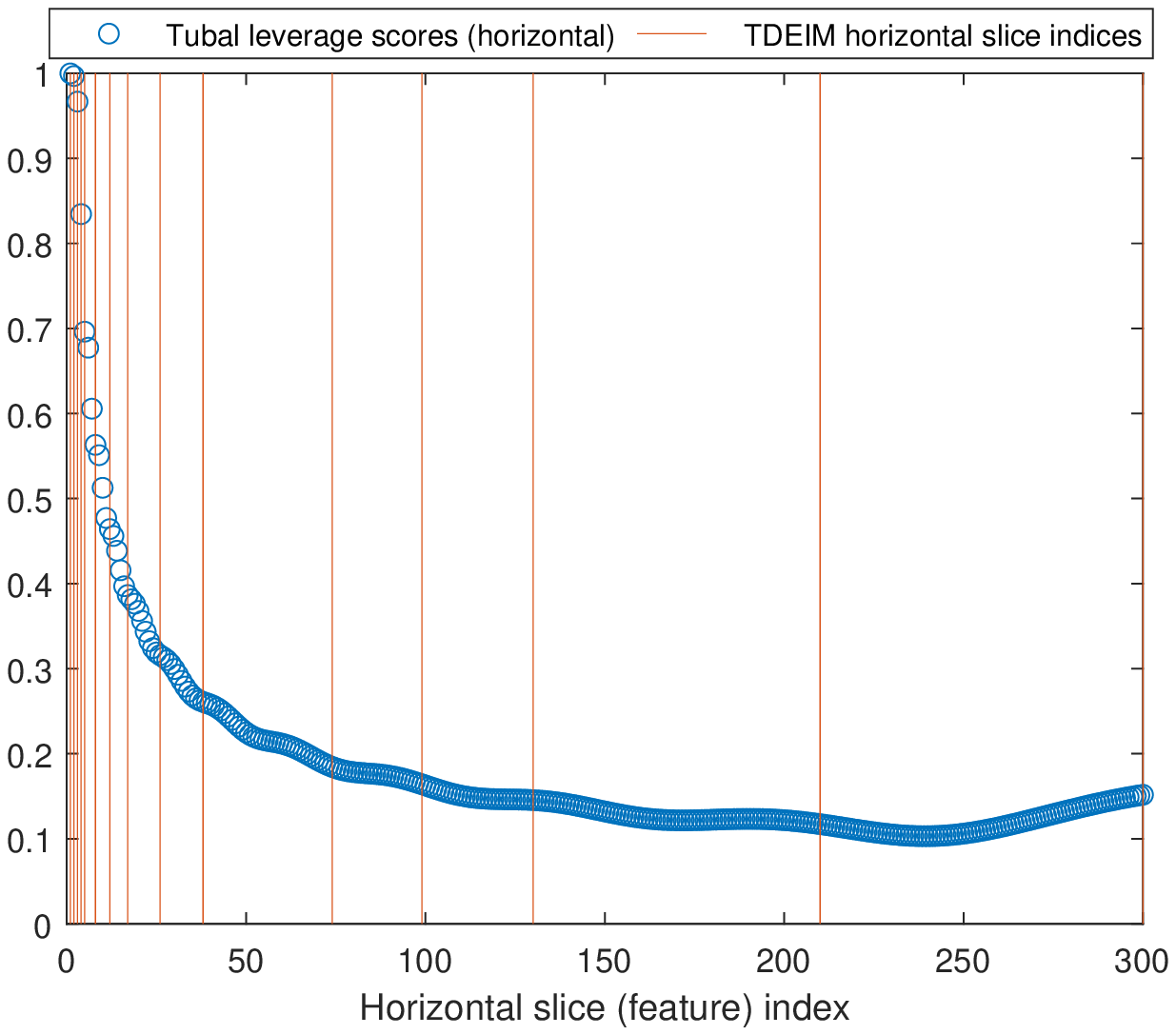}\includegraphics[width=0.5\columnwidth]{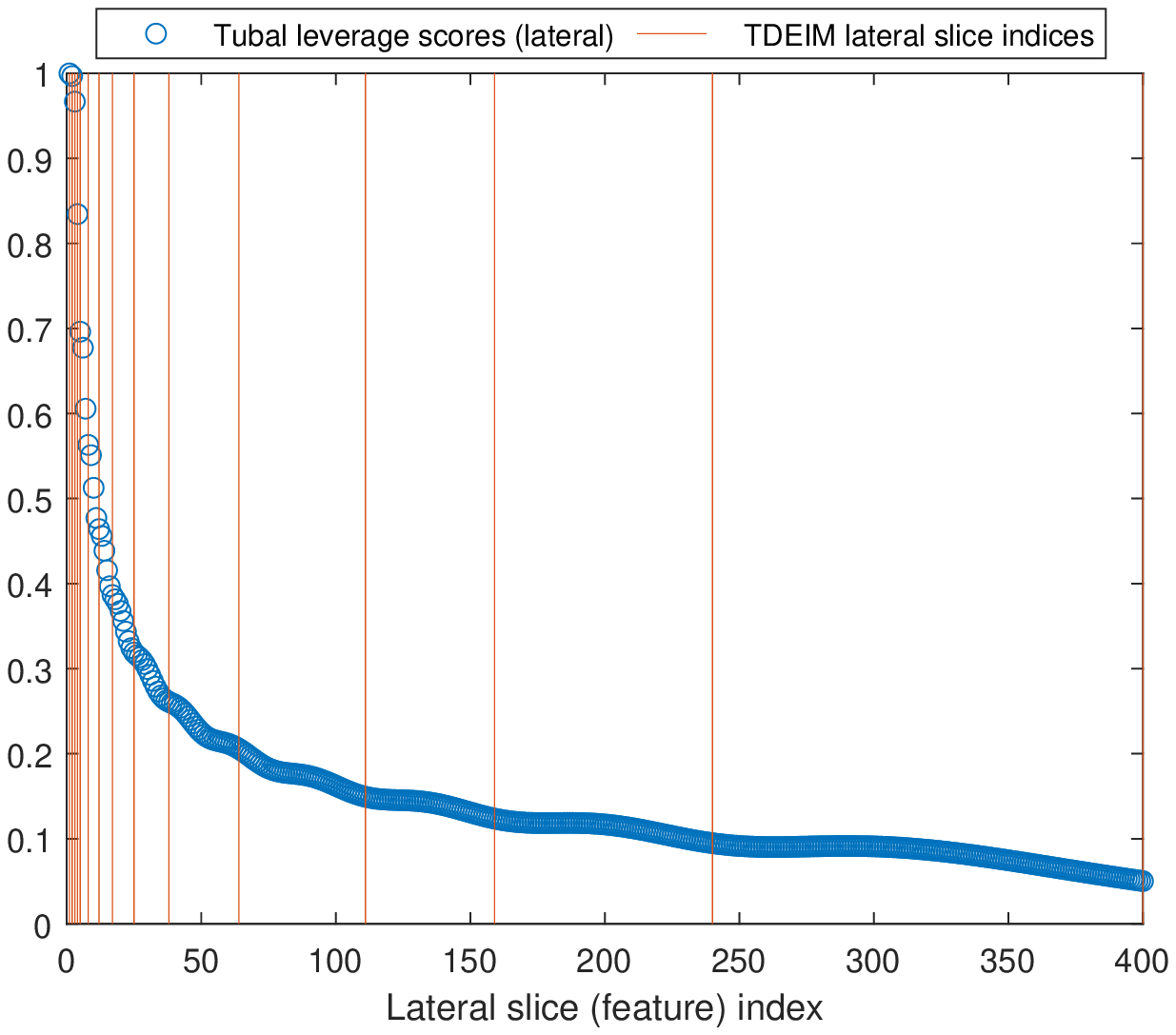}\\
\caption{\small{ ({\bf Left}) The selected horizontal slice indices ({\bf Right}) The selected lateral slice indices 
 for $p=5$. Top singular tensors of tubal rank $R=15$ were used for Example \ref{exa_1}.}}\label{Fig_1}
\end{center}
\end{figure}

\begin{figure}
\begin{center}
\includegraphics[width=0.5\columnwidth]{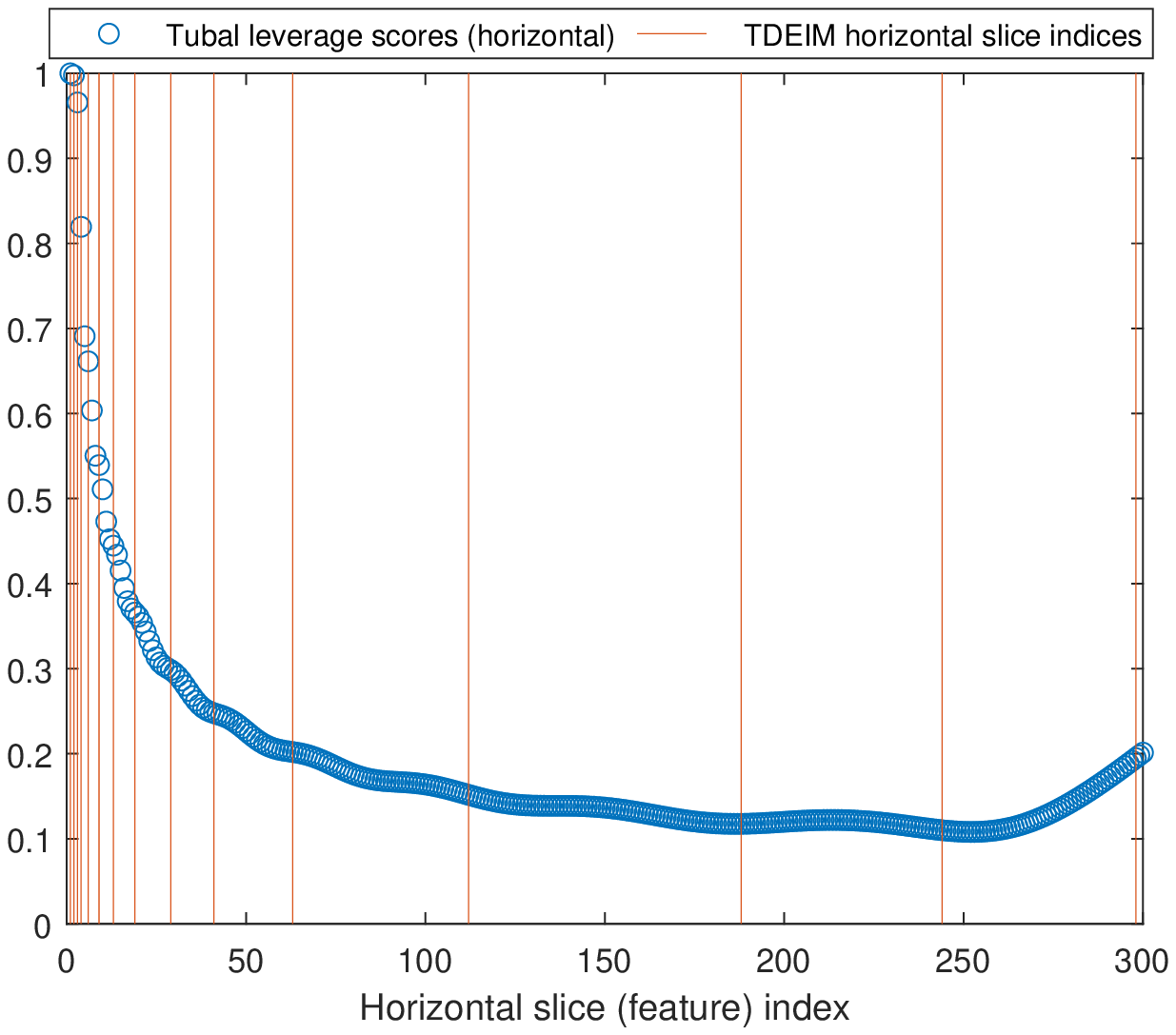}\includegraphics[width=0.5\columnwidth]{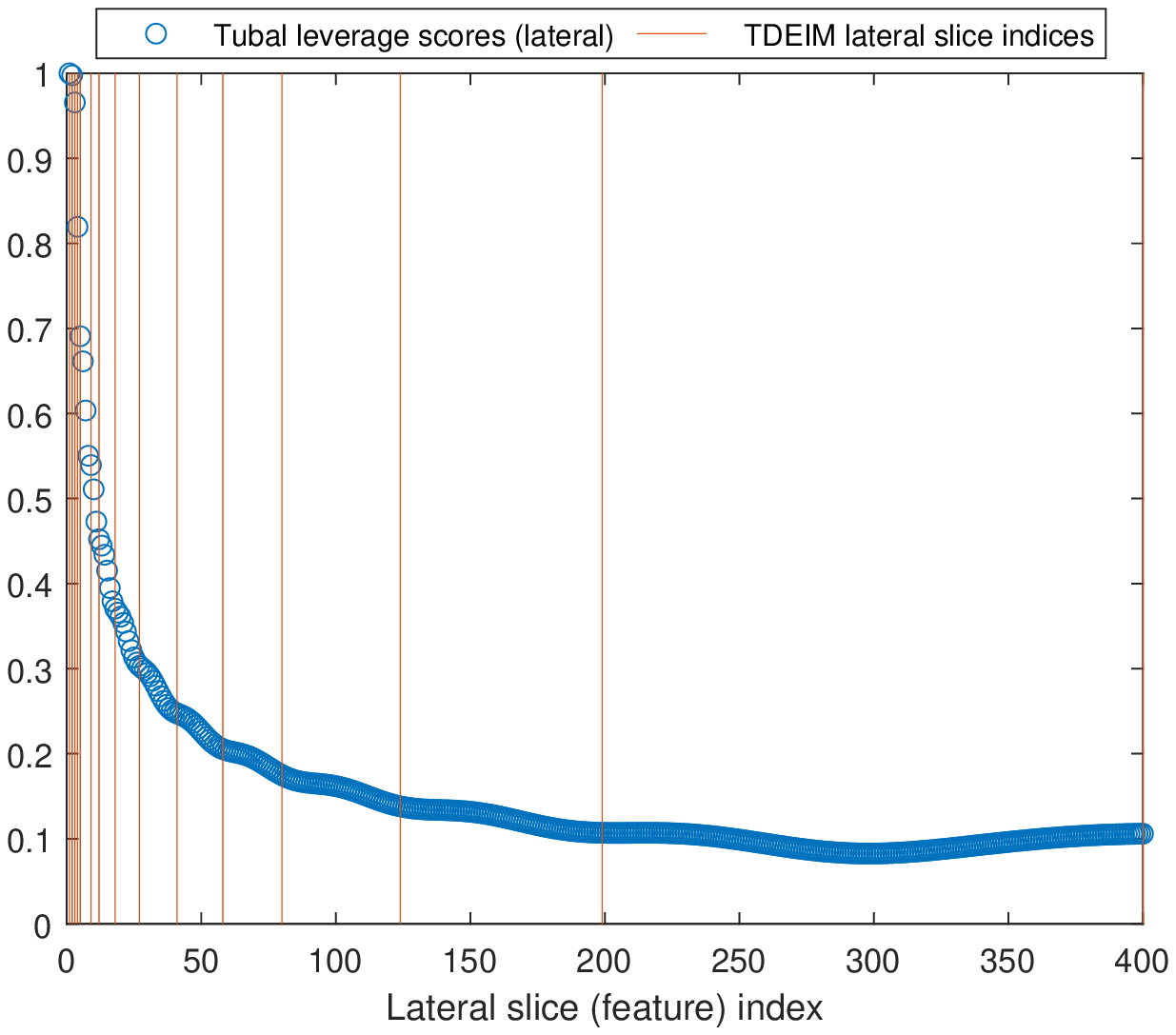}\\
\caption{\small{({\bf Left}) The selected horizontal slice indices ({\bf Right}) The selected lateral slice indices 
 for $p=3$. Top singular tensors of tubal rank $R=15$ were used for Example \ref{exa_1}.}}\label{Fig_3}
\end{center}
\end{figure}

\begin{figure}
\begin{center}
\includegraphics[width=0.5\columnwidth]{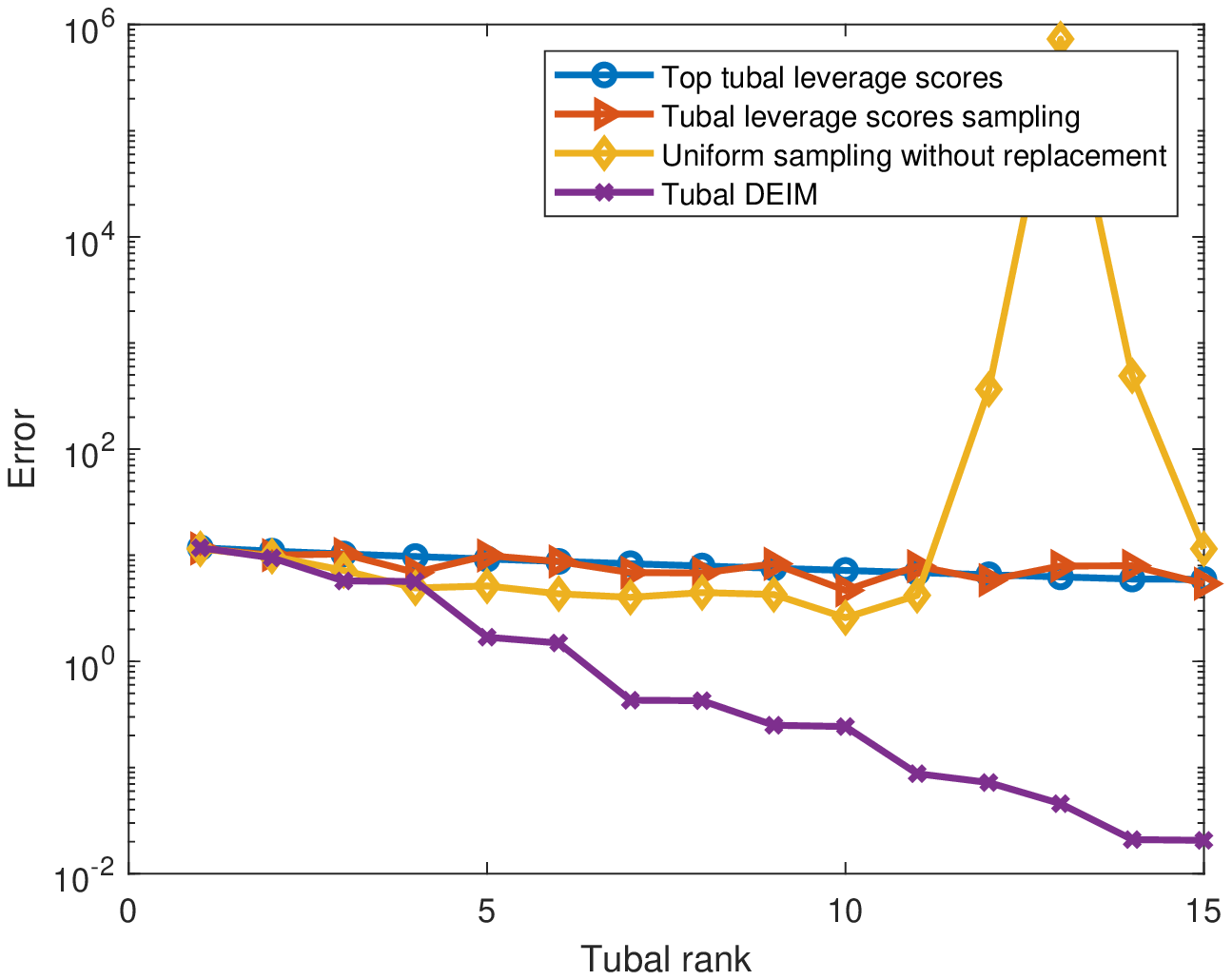}\includegraphics[width=0.5\columnwidth]{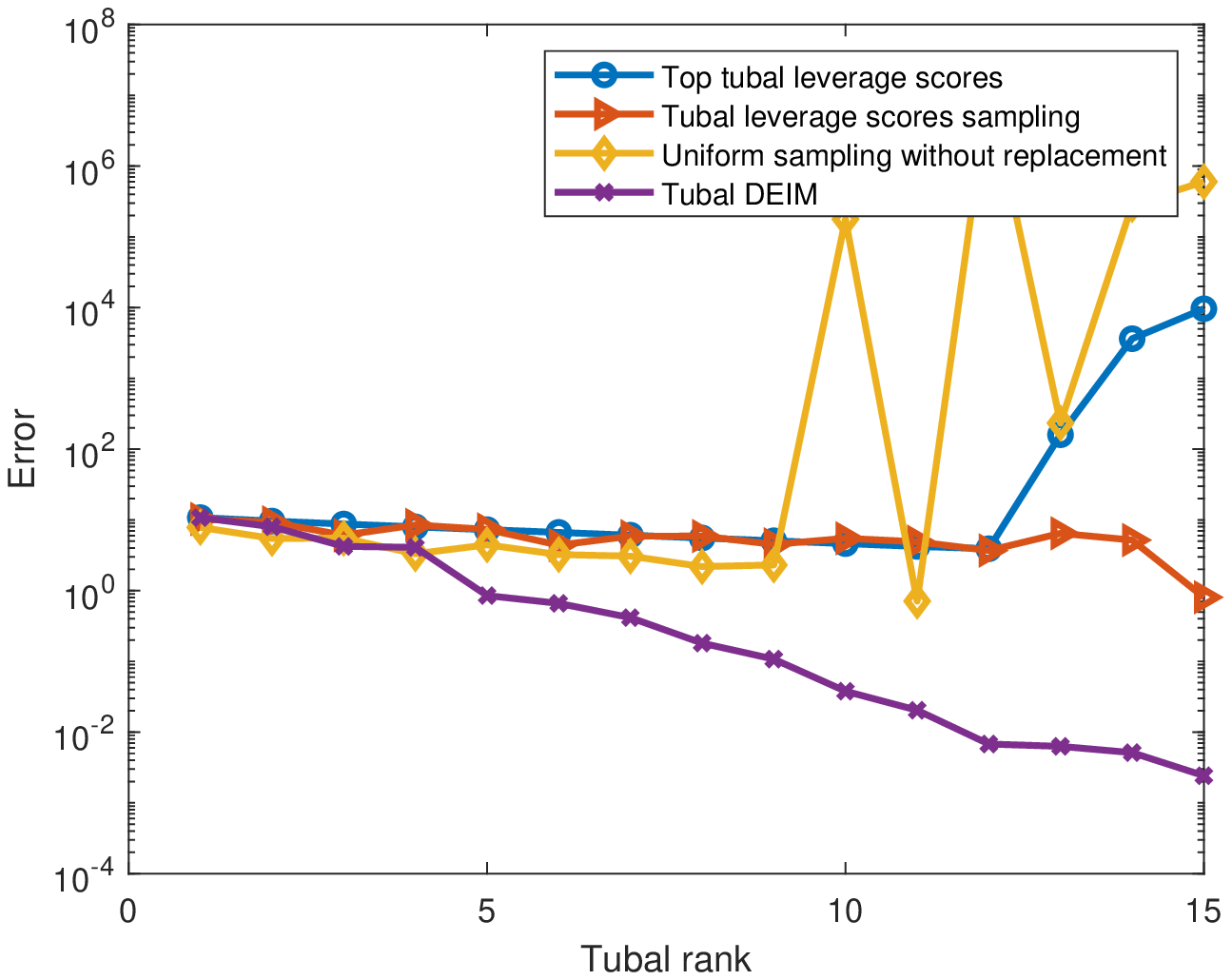}\\
\caption{\small{Errors of different sampling algorithms, ({\bf Left}) $p=5$ ({\bf Right}) $p=3$. Top singular tensors of tubal rank $R=15$ were used for Example \ref{exa_1}.} }\label{Fig_2}
\end{center}
\end{figure}

\begin{figure}
\begin{center}
\includegraphics[width=0.5\columnwidth]{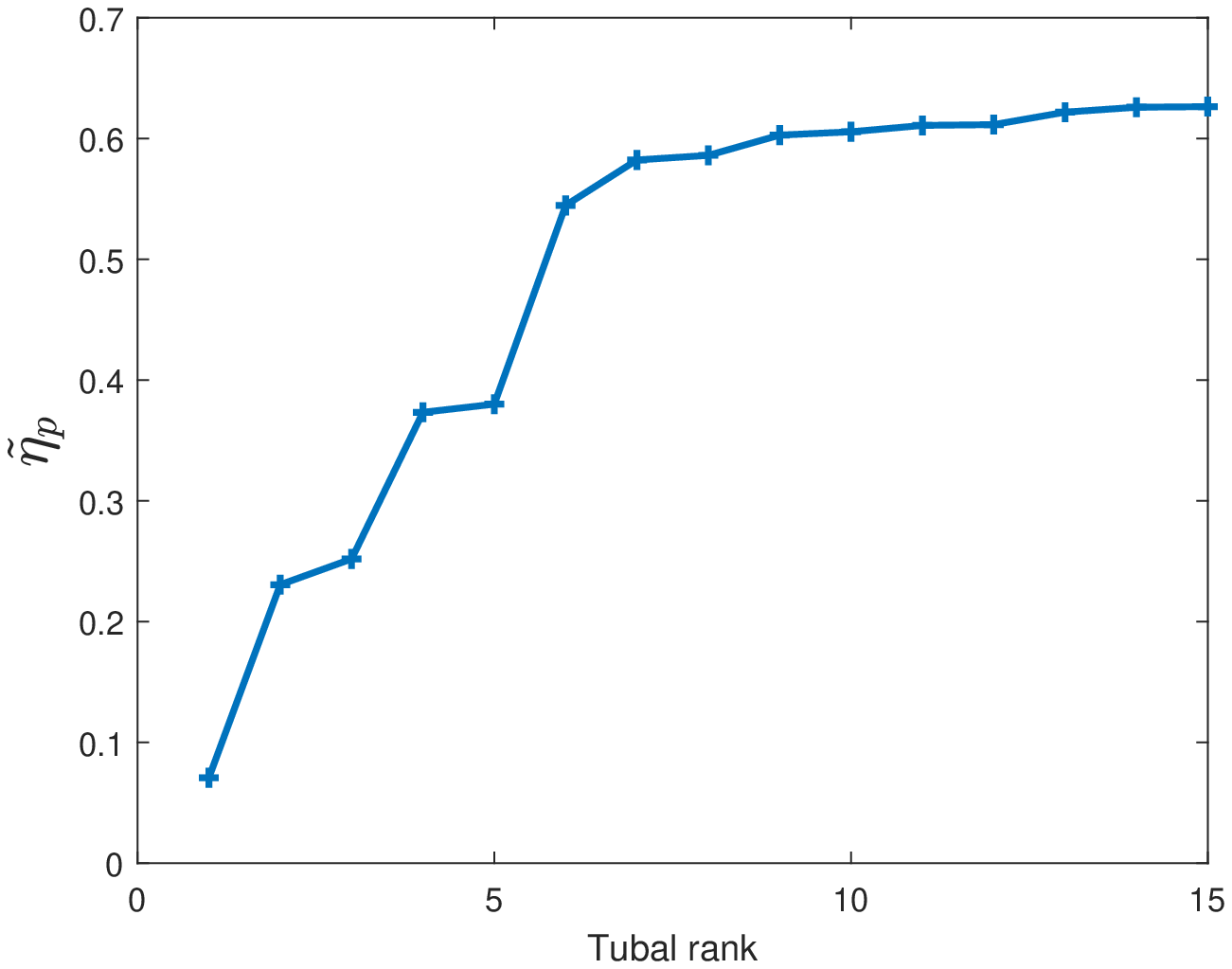}\includegraphics[width=0.5\columnwidth]{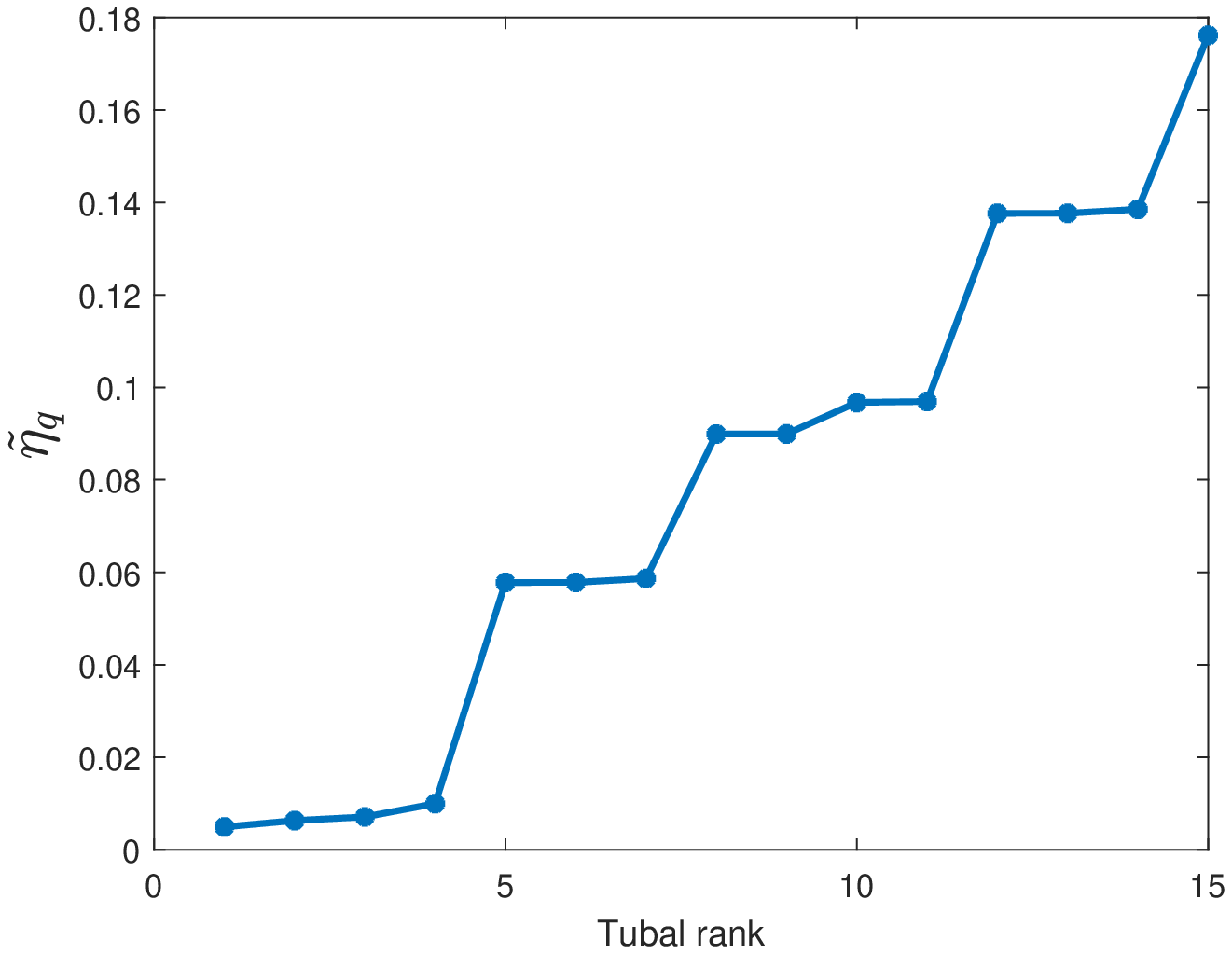}\\
\caption{\small{The error constants $\tilde{\eta}_p$ and $\tilde{\eta}_q$ for the data tensor \eqref{rand_ten} with $p=3$ for Example \ref{exa_1}.}}\label{eta}
\end{center}
\end{figure}

\begin{table}
\begin{center}
\caption{Comparing the approximation errors obtained via the TDEIM (Algorithm \ref{ALG:DEIM_ten}) and the baselines for a low tubal approximation of tubal rank $R=10$ using optimization based tensors tensors for Example \ref{exa_1}.}\label{table_func}
\vspace{0.2cm}
\begin{tabular}{|| c |c |c |c |c||} 
 \hline
  \multirow{2}{*}{Tensors} &  Top Leverage  & Top Leverage & Uniform sampling  & \multirow{2}{*}{TDEIM} \\
    &  Scores & Score sampling & without replacement &  \\
 \hline\hline
 %Gabor tensor  &  
%1.49e-03 & 2.21e-04  &5.99e-04  &  {\bf 3.71e-05} \\ 
 % \hline
  Exponential   &  2.42e-17
 & {\bf 1.38e-17} & 2.26e-16 & 2.35e-16 \\
   \hline
  Rastrigin   &  4.20e-05
 & 1.06e-04 & 7.88e-05 & {\bf 4.13e-05} \\\hline
Booth   &  1.41e-03
 & 2.22e-04 & 1.04e-03 & {\bf 2.10e-04}\\
 \hline
 Matyas   &  3.70e-06
 & 9.47e-07 & 1.10e-06 & {\bf 4.09e-07}\\
 \hline
 Easom   &  7.67e-16
 & 4.96e-16 & 5.70e-16 & {\bf 7.16e-17}\\
 \hline
\end{tabular}
\end{center}
\end{table}

\begin{table}
\begin{center}
\caption{Comparing the running time (seconds) of the HTDEIM (Algorithm \ref{ALG:DEIM_tenlever}) and the TDEIM (Algorithm \ref{ALG:DEIM_ten}) for the data tensor \eqref{rand_ten} for Example \ref{exa_1}.}\label{Table1}
\vspace{0.2cm}
\begin{tabular}{|| c| c c c c||} 
 \hline
  & $R=2$ & $R=5$ & $R=10$ & $R=15$\\%[0.5ex] 
 \hline\hline
 HTDEIM   & {\bf 12.52} & { \bf 14.354} & {\bf 17.15} & {\bf 20.90} \\ 
 % \hline
 %  Randomized HTDEIM algorithm \ref{ALG:DEIM_tenlever}  & {\bf } & {\bf } & {\bf } & {\bf } \\ 
  \hline
  TDEIM  & 16.01 & 23.29 & 25.31 & 32.19\\
  % \hline
  %  Randomised TDEIM algorithm \ref{ALG:DEIM_ten} \ref{ALG:DEIM_tenlever}   &  &  &  & \\
 \hline
\end{tabular}
\end{center}
\end{table}

\begin{table}
\begin{center}
\caption{Comparing the errors of the HTDEIM (Algorithm \ref{ALG:DEIM_tenlever}) and the TDEIM (Algorithm \ref{ALG:DEIM_ten}) for Example \ref{exa_1}.}\label{Table2}
\vspace{0.2cm}
\begin{tabular}{|| c| c c c c||} 
 \hline
  & $R=2$ & $R=5$ & $R=10$ & $R=15$\\%[0.5ex] 
 \hline\hline
 HTDEIM  &  0.3102
 & 0.0403 & 0.0015 & 0.0014  \\ 
 % \hline
 %  Randomized HTDEIM algorithm \ref{ALG:DEIM_tenlever}  & {\bf } & {\bf } & {\bf } & {\bf } \\ 
  \hline
  TDEIM  &  0.2902
 & 0.0308 & 0.0014 & 0.0012 \\
  % \hline
  %  Randomised TDEIM algorithm \ref{ALG:DEIM_ten} \ref{ALG:DEIM_tenlever}   &  &  &  & \\
 \hline
\end{tabular}
\end{center}
\end{table}

% \[
% \cX(:,:,k)={\rm hilb}(200),\,\,k=1,2,\ldots,120,
% \]

\end{exa}

\begin{exa}[{\bf Image approximation}]\label{exa_2}
In this experiment, we consider the two color images ``Lena'' and ``peppers'' which are of size $256\times 256\times 3$. We used the top left and right tubal singular tensors of tubal rank $R=20$ to build the tubal leverage scores and also as a basis in the TDEIM algorithm. The tubal leverage scores and the indices, which were selected by the TDEIM are demonstrated in Figures \ref{lenaindices} and \ref{peppersindices}. The errors of the approximations obtained by the proposed and three baseline algorithms are also reported in Figure \ref{imagecompare}. Similar to the synthetic case, the better performance of the proposed algorithm is visible. In this experiment, the proposed TDEIM and top tubal leverage scores approaches were monotonically decreasing while the tubal leverage score sampling and uniform sampling methods were not. Since the tubal leverage scores were almost uniformly distributed, the TDEIM approximately samples indices for lateral and horizontal slices uniformly.     

\end{exa}

\begin{figure}
\begin{center}
\includegraphics[width=0.5\columnwidth]{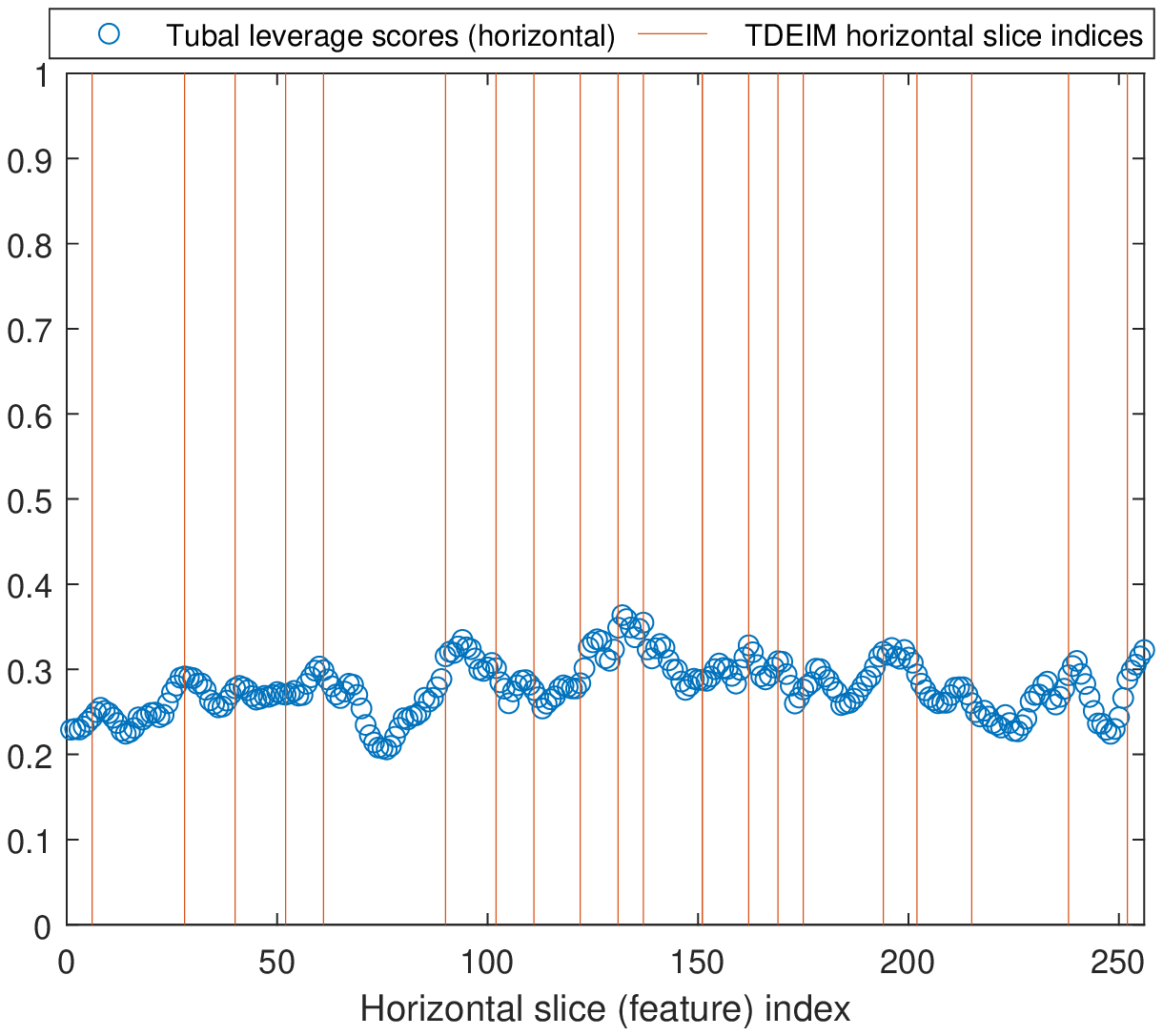}\includegraphics[width=0.5\columnwidth]{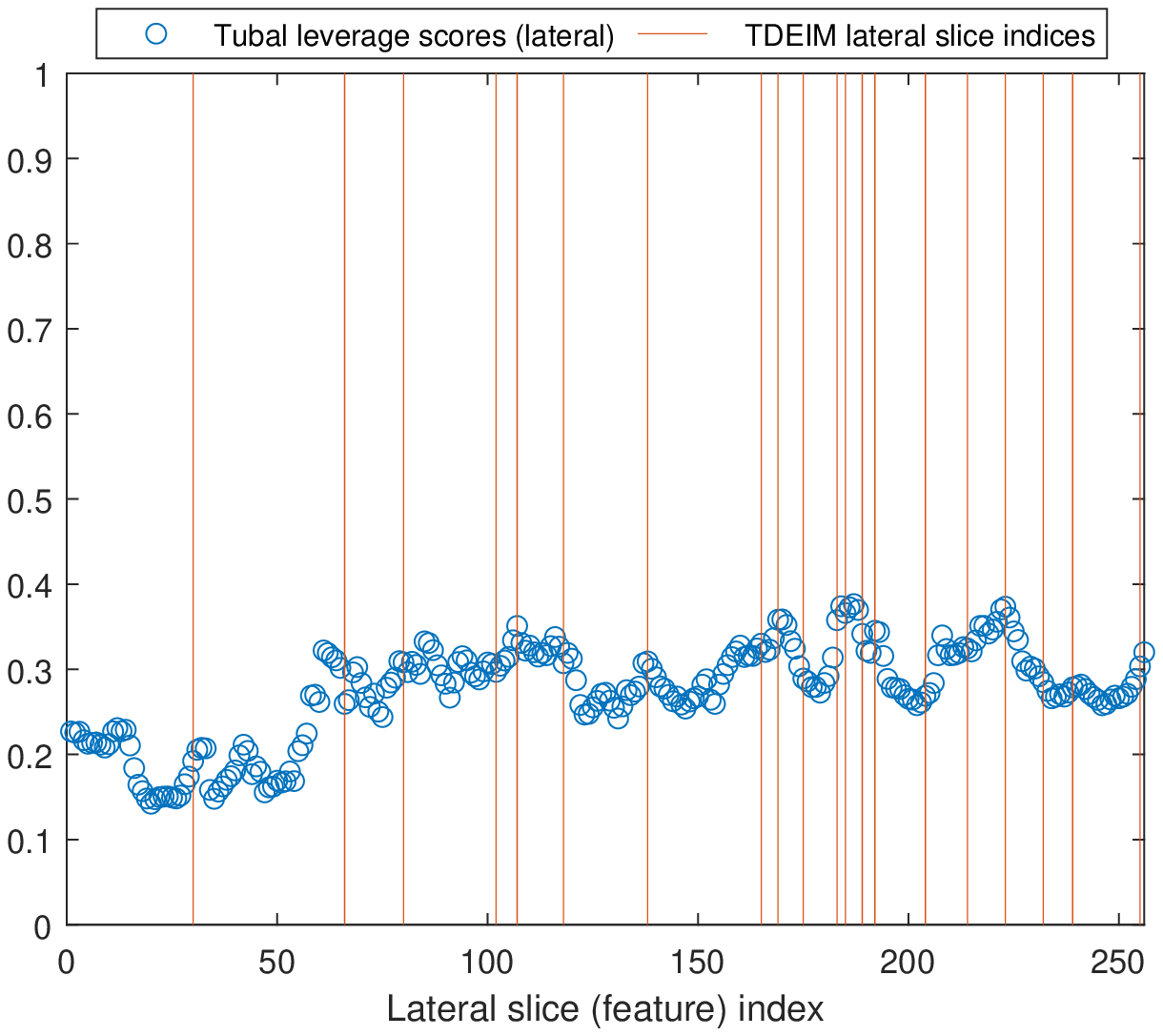}\\
\caption{\small{({\bf Left}) The selected horizontal slice indices ({\bf Right}) The selected lateral slice indices 
 for the Lena image. Top singular tensors of tubal rank $R=20$ were used for Example \ref{exa_2}.}}\label{lenaindices}
\end{center}
\end{figure}

\begin{figure}
\begin{center}
\includegraphics[width=0.5\columnwidth]{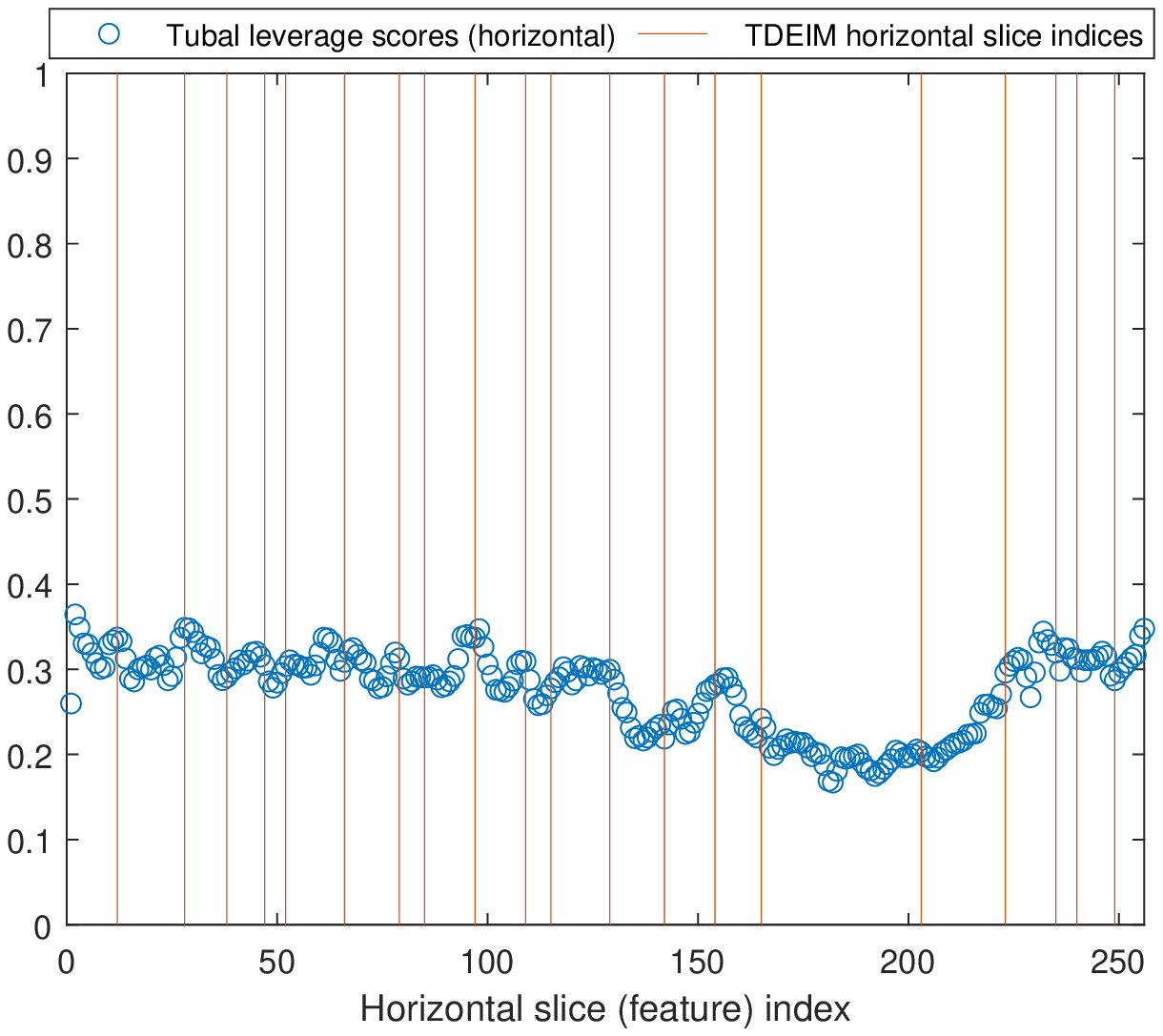}\includegraphics[width=0.5\columnwidth]{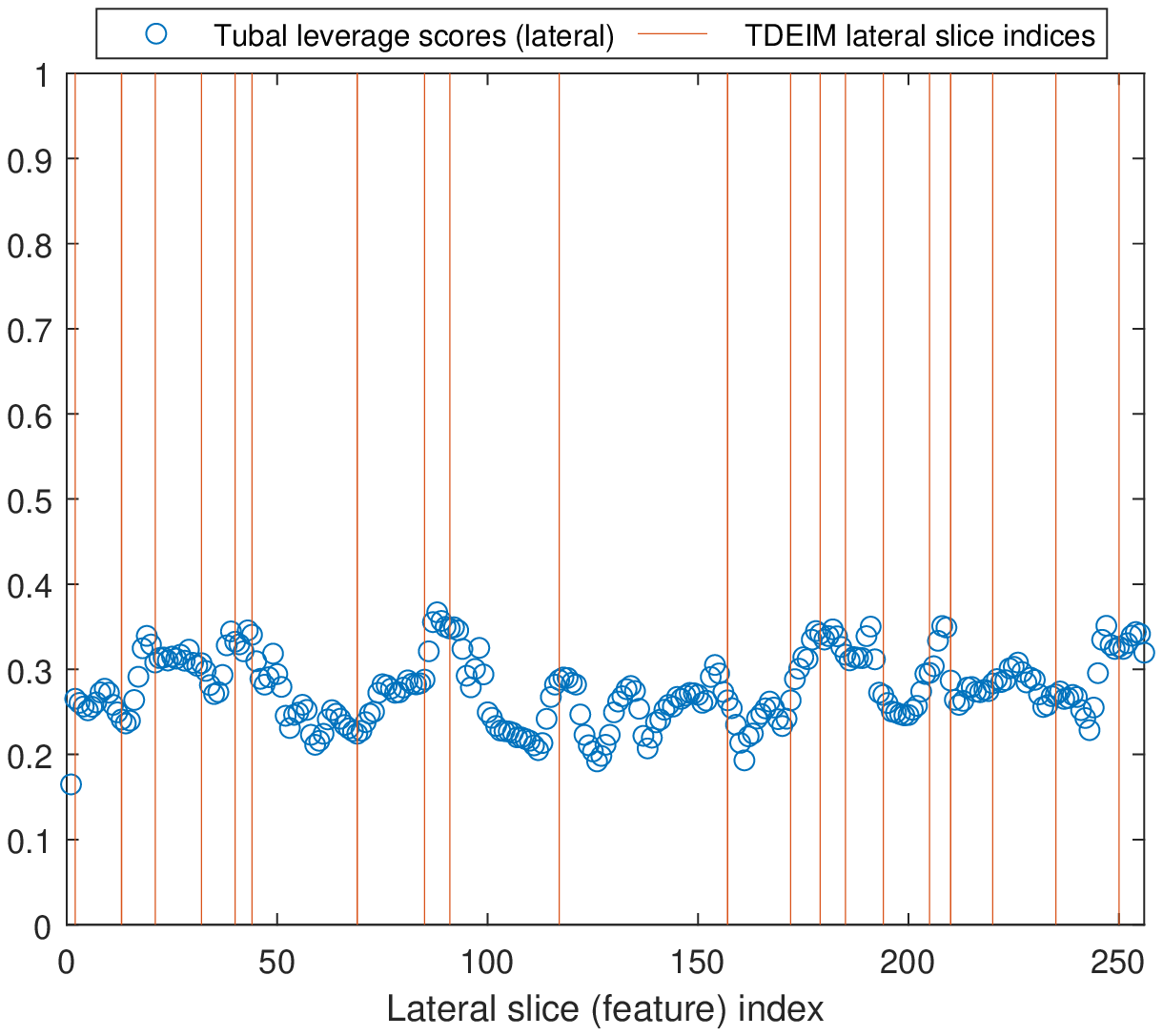}\\
\caption{\small{({\bf Left}) The selected horizontal slice indices ({\bf Right}) The selected lateral slice indices 
 for the peppers image. Top singular tensors of tubal rank $R=20$ were used for Example \ref{exa_2}.}}\label{peppersindices}
\end{center}
\end{figure}

\begin{figure}
\begin{center}
\includegraphics[width=0.5\columnwidth]{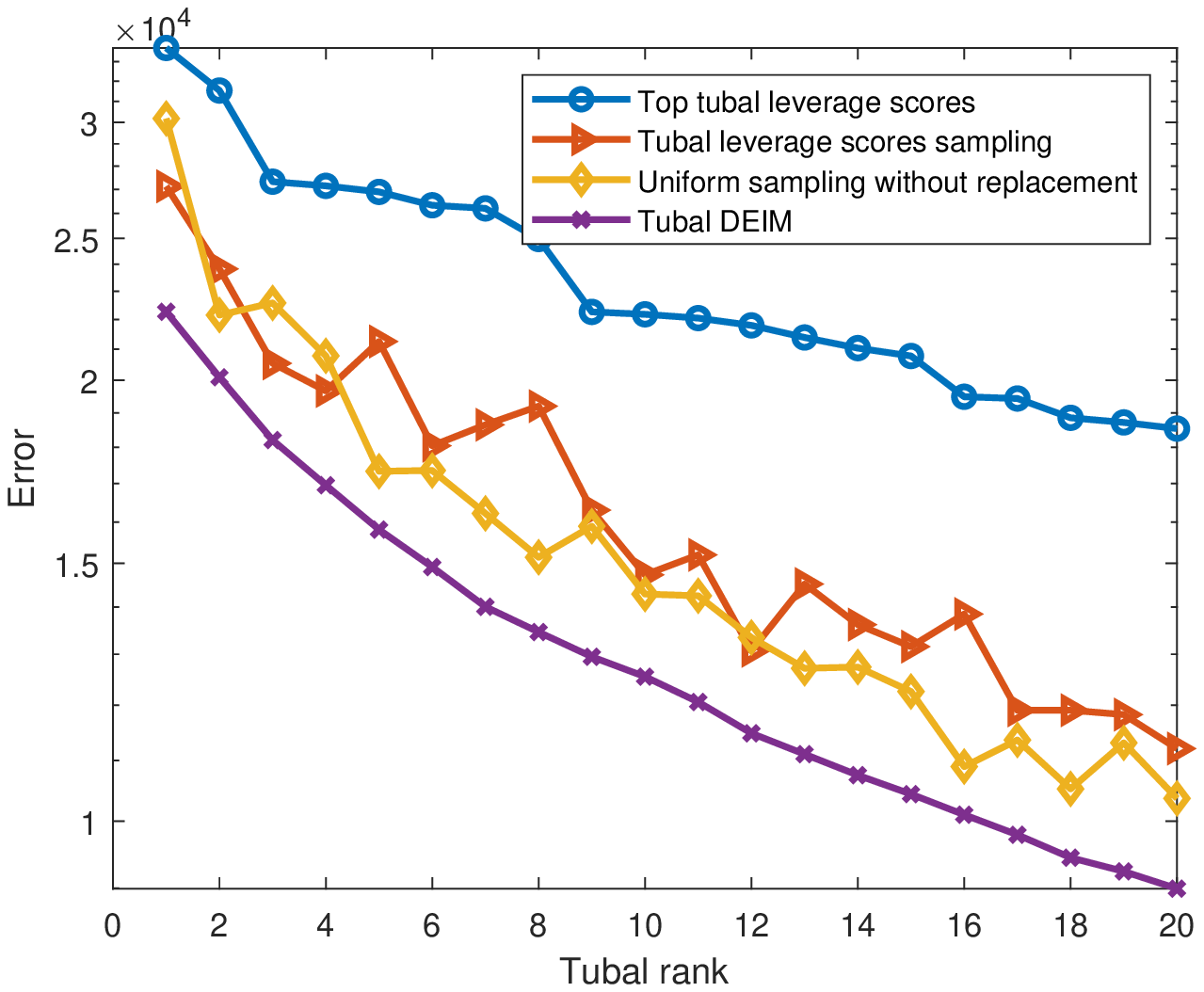}\includegraphics[width=0.5\columnwidth]{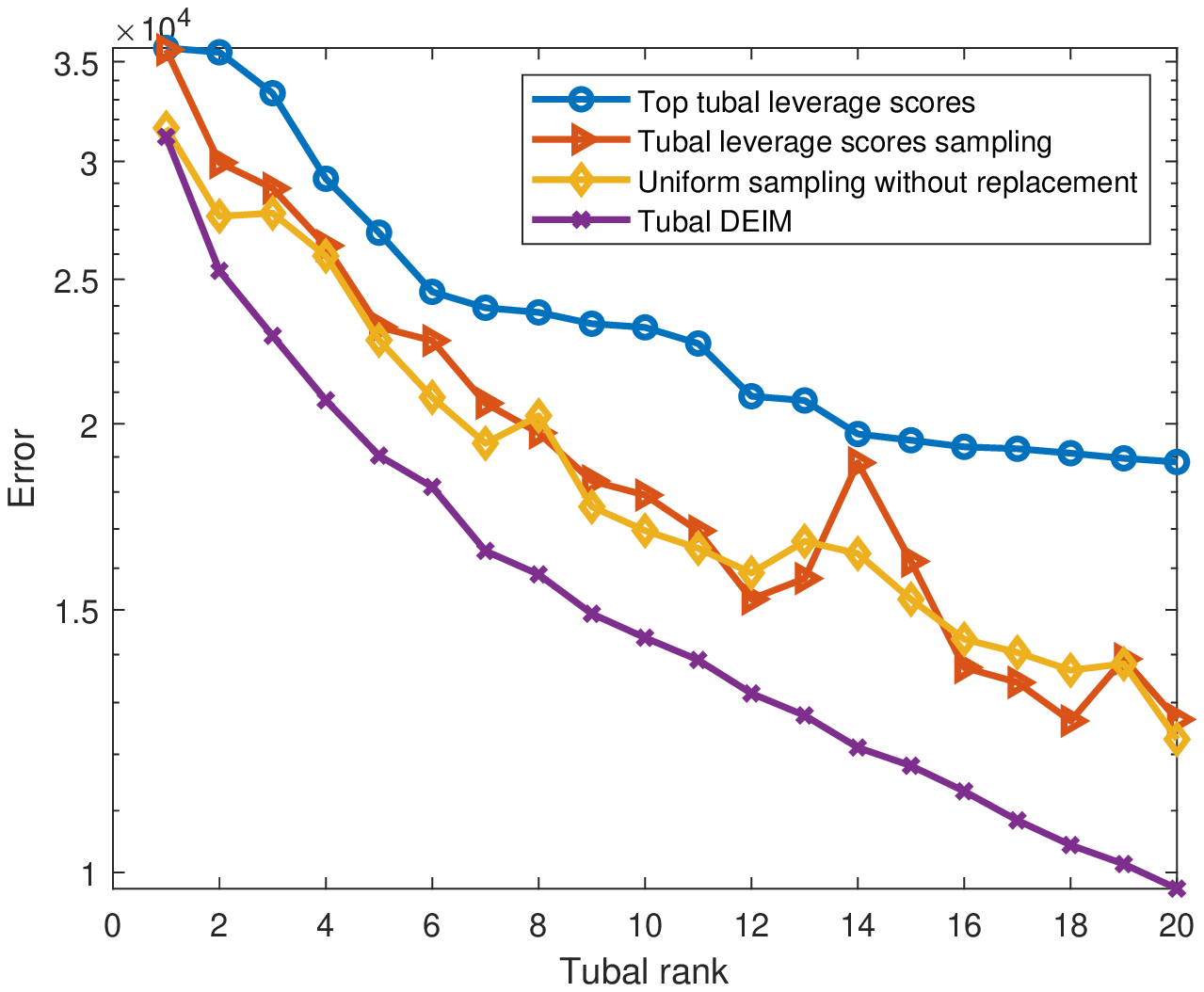}\\
\caption{\small{Errors of different sampling algorithms, ({\bf Left}) lena ({\bf Right}) peppers. Top singular tensors of tubal rank $R=20$ were used for Example \ref{exa_2}.} }\label{imagecompare}
\end{center}
\end{figure}

\begin{exa}\label{exa_3} ({\bf Video approximation})
In this experiment, we considered two videos: ``Foreman'' and ``Suzie video'' from \url{http://trace.eas.asu.edu/yuv/}. The size of Foremen video is $176\times 144\times 300$ and the size of the Suzie video is $176\times 144\times 150$. For both of them, we used tubal rank $R=40$ and computed the top singular tensors to build the tual leverage scores (horizontal and lateral) and also used them as bases in the TDEIM algorithms. The tubal leverage scores of the Foremen and Suzie video datasets and the indices, which were selected by the TDEIM are shown in Figures \ref{Foremanindices} and \ref{Suzieindices}, respectively. The accuracy of the algorithms are reported in Figure \ref{videocompare}. The results verifies the superiority of the proposed sampling method over the there widely used baseline sampling algorithms. It is interesting to note that here again since we do not have uniform distribution of tubal leverage scores, the TDEIM selects most of the indices in the region with high tubal leverage scores but all of them.
    
\end{exa}

\begin{figure}
\begin{center}
\includegraphics[width=0.5\columnwidth]{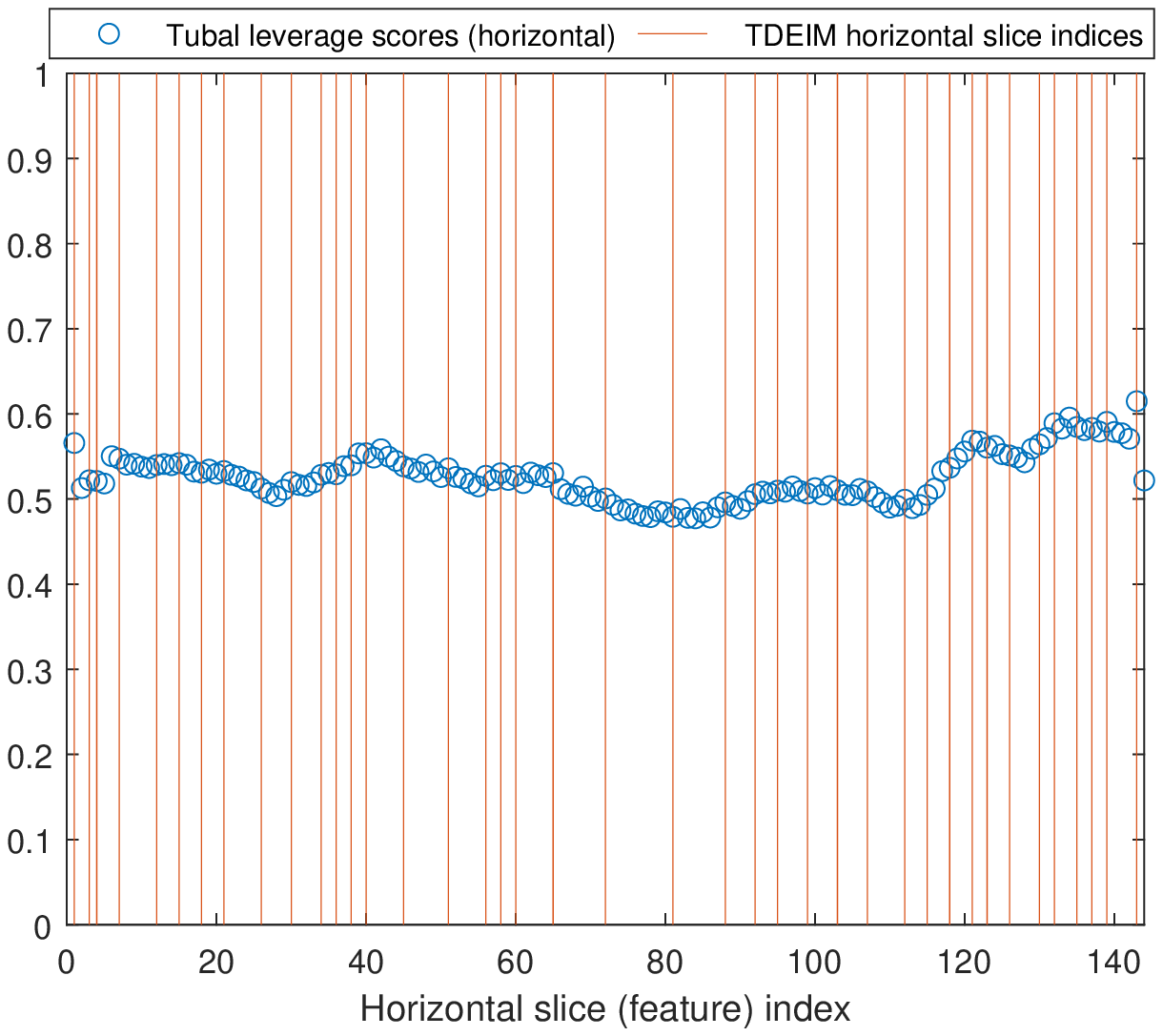}\includegraphics[width=0.5\columnwidth]{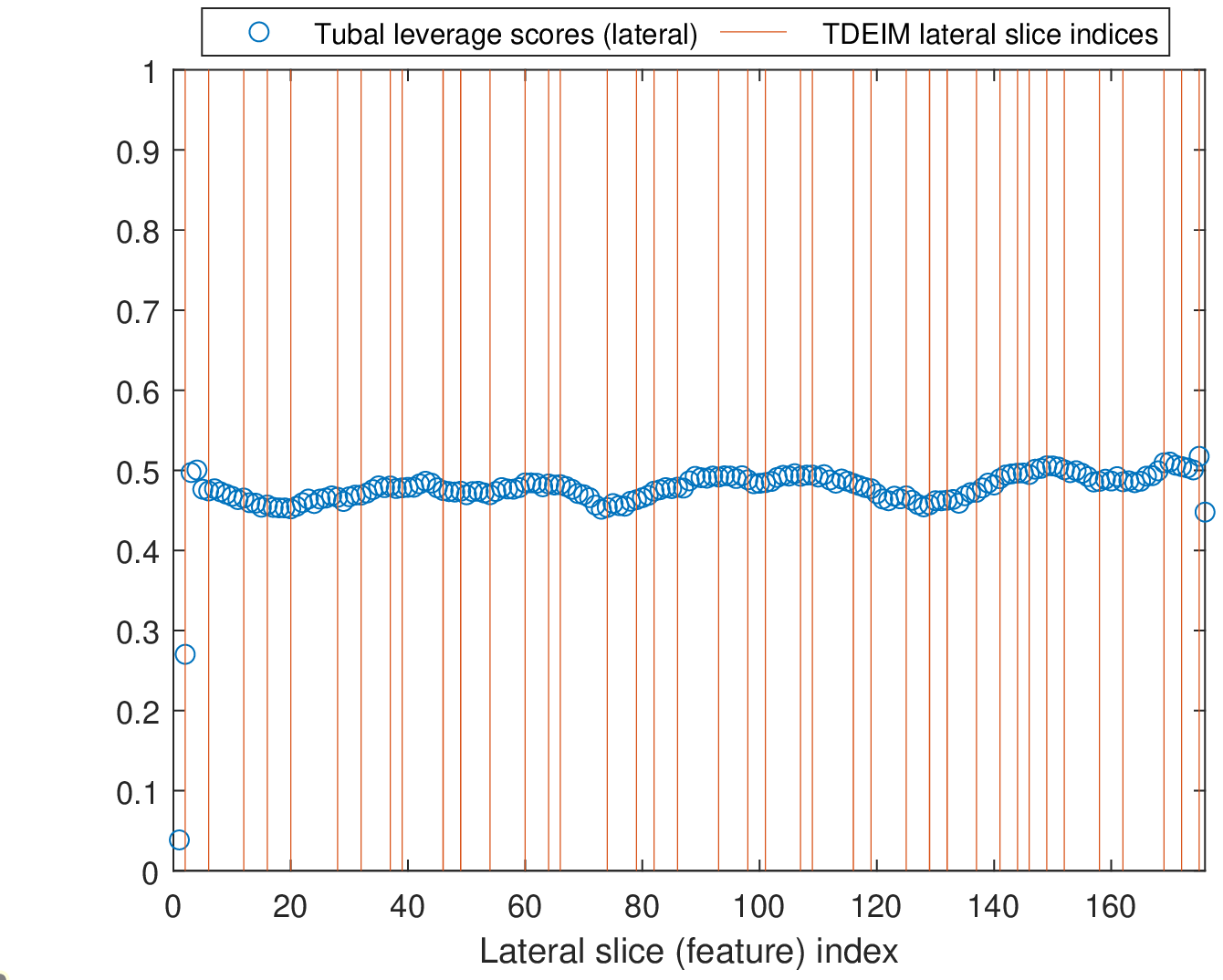}\\
\caption{\small{({\bf Left}) The selected horizontal slice indices. ({\bf Right}) The selected lateral slice indices 
 for the Foreman video. Top singular tensors of tubal rank $R=40$ were used for Example \ref{exa_3}.}}\label{Foremanindices}
\end{center}
\end{figure}

\begin{figure}
\begin{center}
\includegraphics[width=0.5\columnwidth]{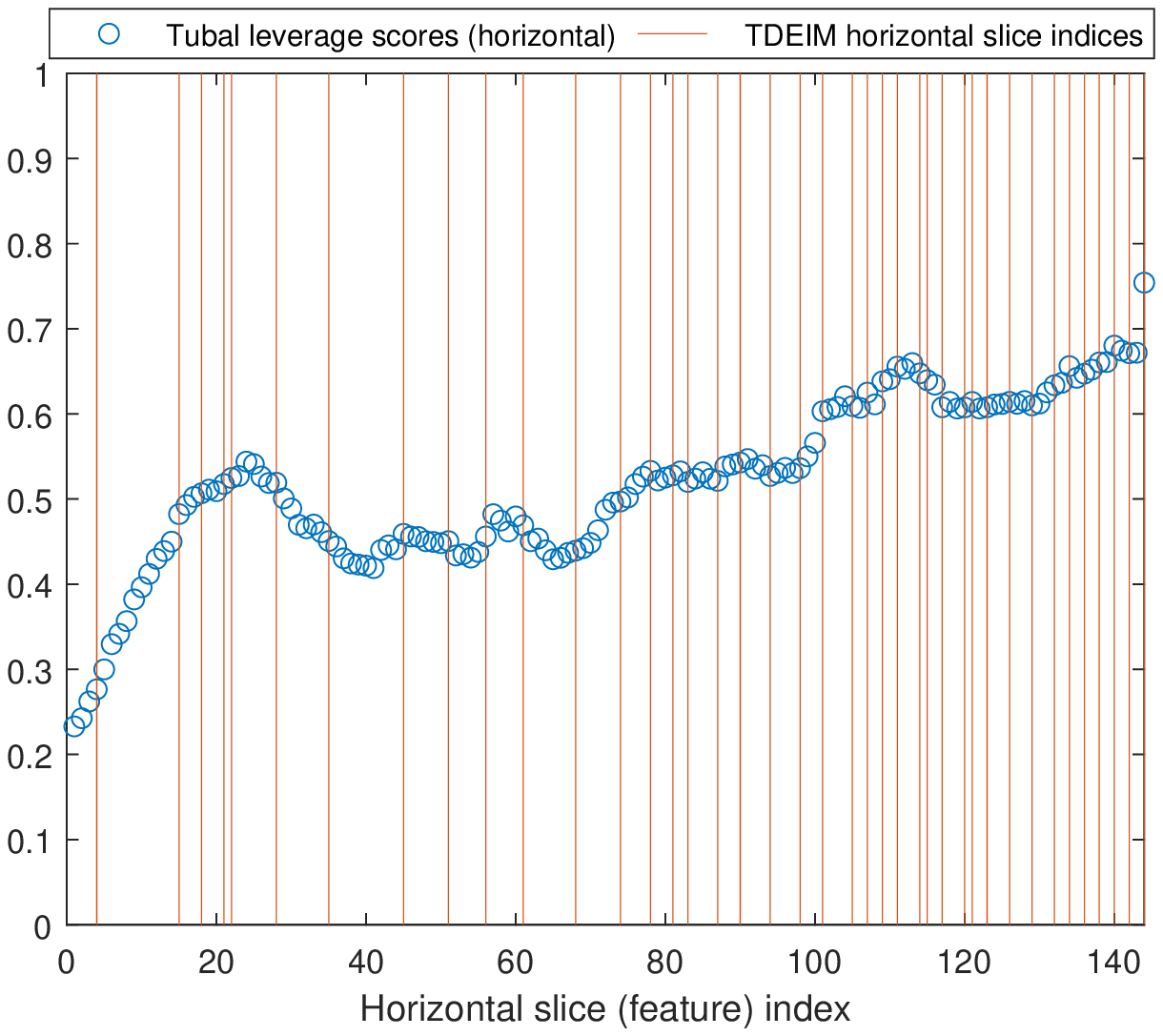}\includegraphics[width=0.5\columnwidth]{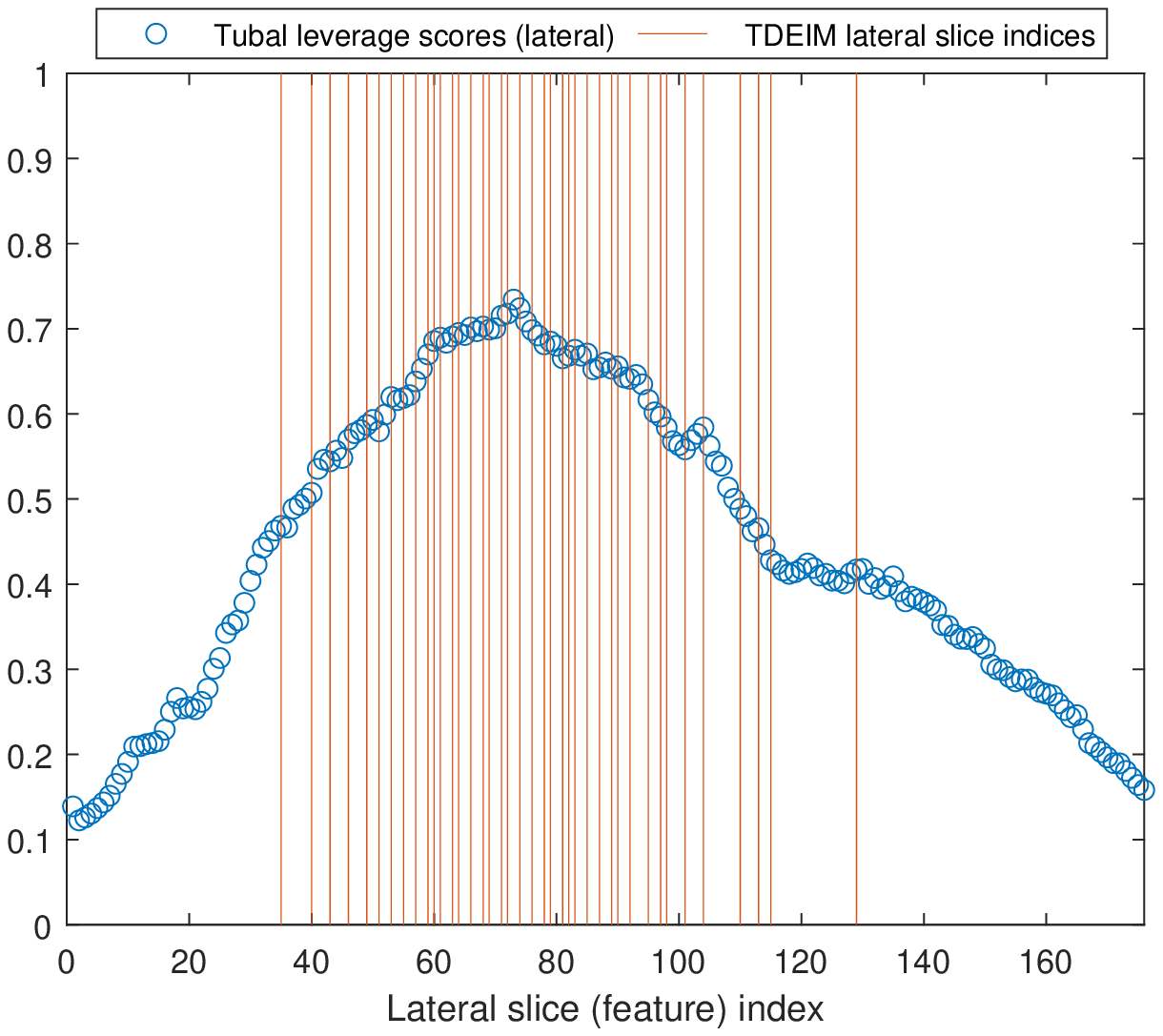}\\
\caption{\small{({\bf Left}) The selected horizontal slice indices ({\bf Right}) The selected lateral slice indices 
 for the Suzie video. Top singular tensors of tubal rank $R=40$ were used for Example \ref{exa_3}.}}\label{Suzieindices}
\end{center}
\end{figure}

\begin{figure}
\begin{center}
\includegraphics[width=0.5\columnwidth]{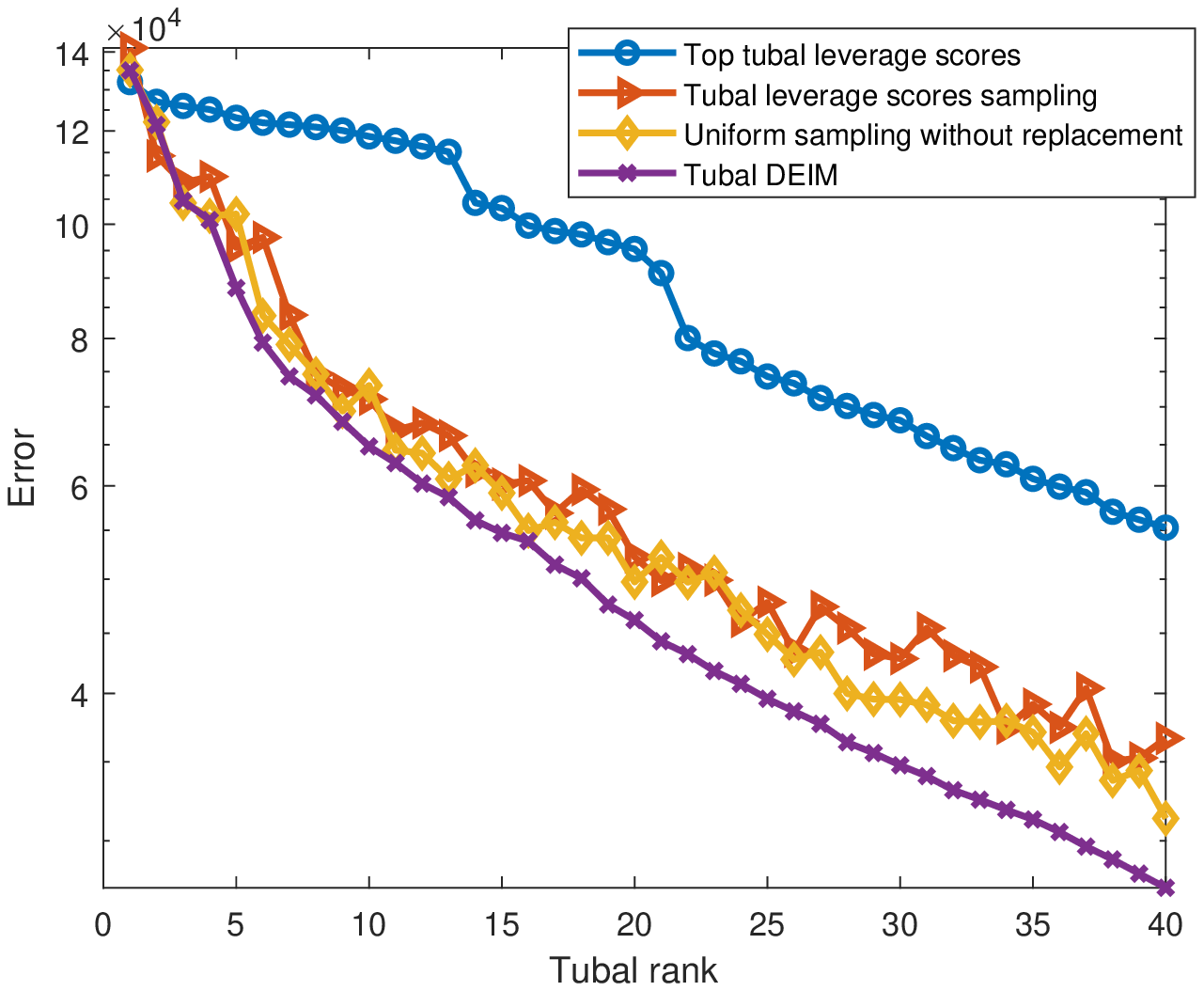}\includegraphics[width=0.5\columnwidth]{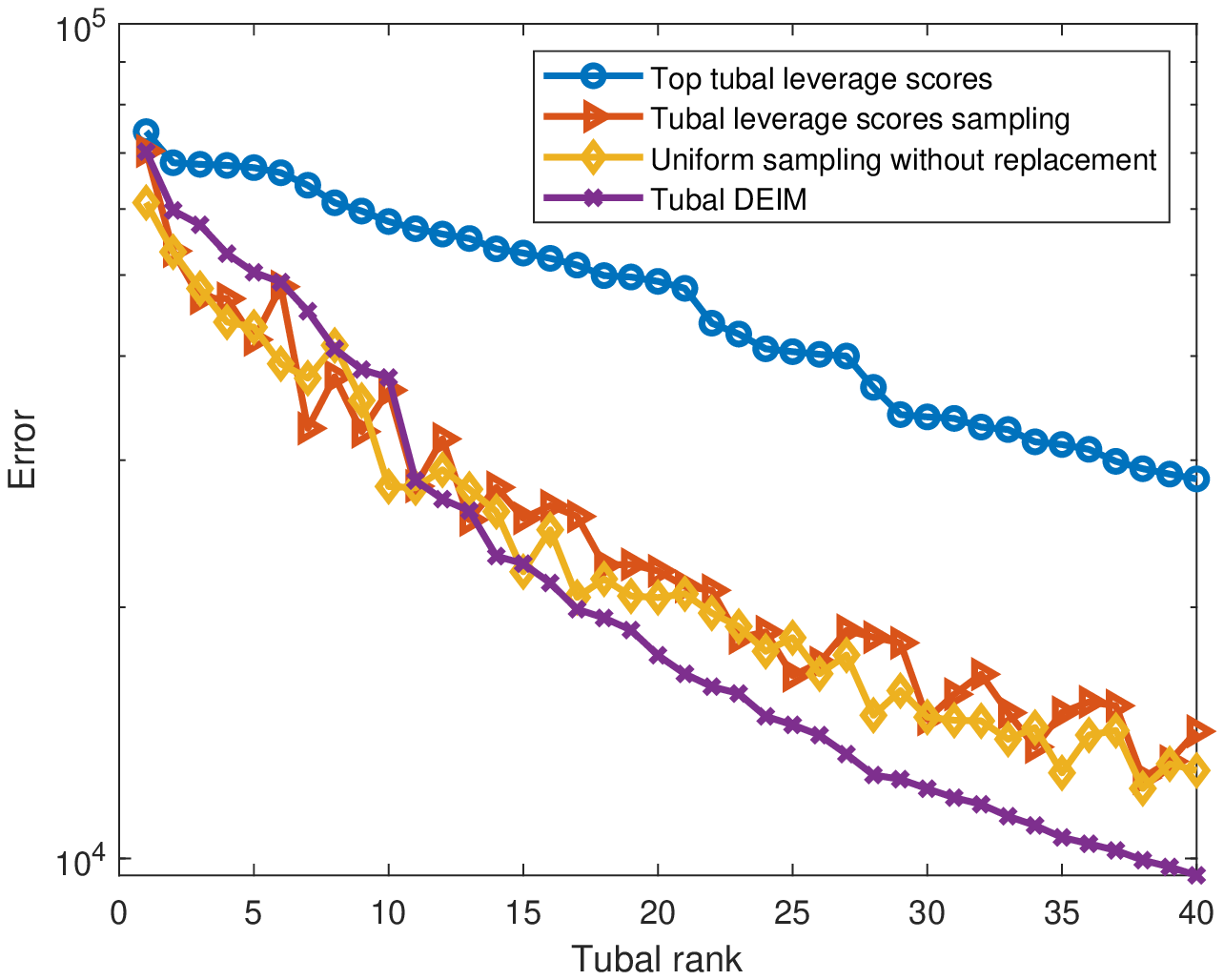}\\
\caption{\small{Errors of different sampling algorithms, ({\bf Left}) Foreman video ({\bf Right}) Suzie video. Top singular tensors of tubal rank $R=40$ were used for Example \ref{exa_3}.} }\label{videocompare}
\end{center}
\end{figure}

\begin{exa}\label{exa_5} ({\bf Classification problem})
In this experiment, we demonstrate the application of the proposed TDEIM method to the classification task on a subset of the MNIST hand-written images\footnote{\url{http://yann.lecun.com/exdb/mnist/}}, which consists of the first $1000$ images for digits $1$ and $7$. Each image of size $28 \times 28$ is transformed by Gabor wavelets \cite{lee1996image} to give an order-4 tensor of size $28 \times 28 \times
8\, ({\rm orientations}) \times 4\, ({\rm scales})$ or order-3 tensor of size $784 \times 8\,({\rm orientations}) \times
4\, ({\rm scales})$. Concatenate order-3 Gabor tensors from all images to give a new
tensor, $\cY$ of dimensions $784 \times 8 \times 4 \times N$, where $N$ is the number of images. 

{We split the data into 10 folds randomly following the 10-fold cross-validation.
For each phase of test, one fold of the data is used for test, and the rest for training.}
We train a linear discriminant analysis classifier (LDA) on the mentioned dataset \cite{hastie2009elements,fisher1936use}.
%The linear discriminant analysis (LDA) is a widely used approach not only for classification, but also for dimension reduction, and data visualization tasks\cite{hastie2009elements,fisher1936use}. 
It is known that although the LDA is a simple method, it can provide decent, interpretable and robust classification results \cite{hastie2009elements,fisher1936use}. The weight tensor, which was trained by the LDA is a third order tensor of size $32\times 32\times 28$ and to build a light-weight model with lower number of parameters, we compute a low tubal rank approximation of the weight tensor by sampling some lateral and horizontal slices using the proposed TDEIM method. This leads to a faster inference time of the underlying model. 

The corresponding accuracy achieved by the low tubal rank approximation was also calculated for different numbers of sampled lateral and horizontal slices. The classification accuracy for the digits $(1,7)$ using different numbers of sampled lateral/horizontal slices are reported in Figure \ref{Accuray} (Upper). As we can see, the accuracy of 0.98 is attained for 15 horizontal and lateral slices, indicating that the classifier is fairly accurate in recognizing the digits 1 and 7. The 10-fold cross validation results for all combinations of different digits are displayed in Figure \ref{Accuray} (Bottom). The results clearly demonstrate that the lightweight model can provide correct classifications for the any combination of digits. 

\begin{figure}
\begin{center}
\includegraphics[width=0.7\columnwidth]{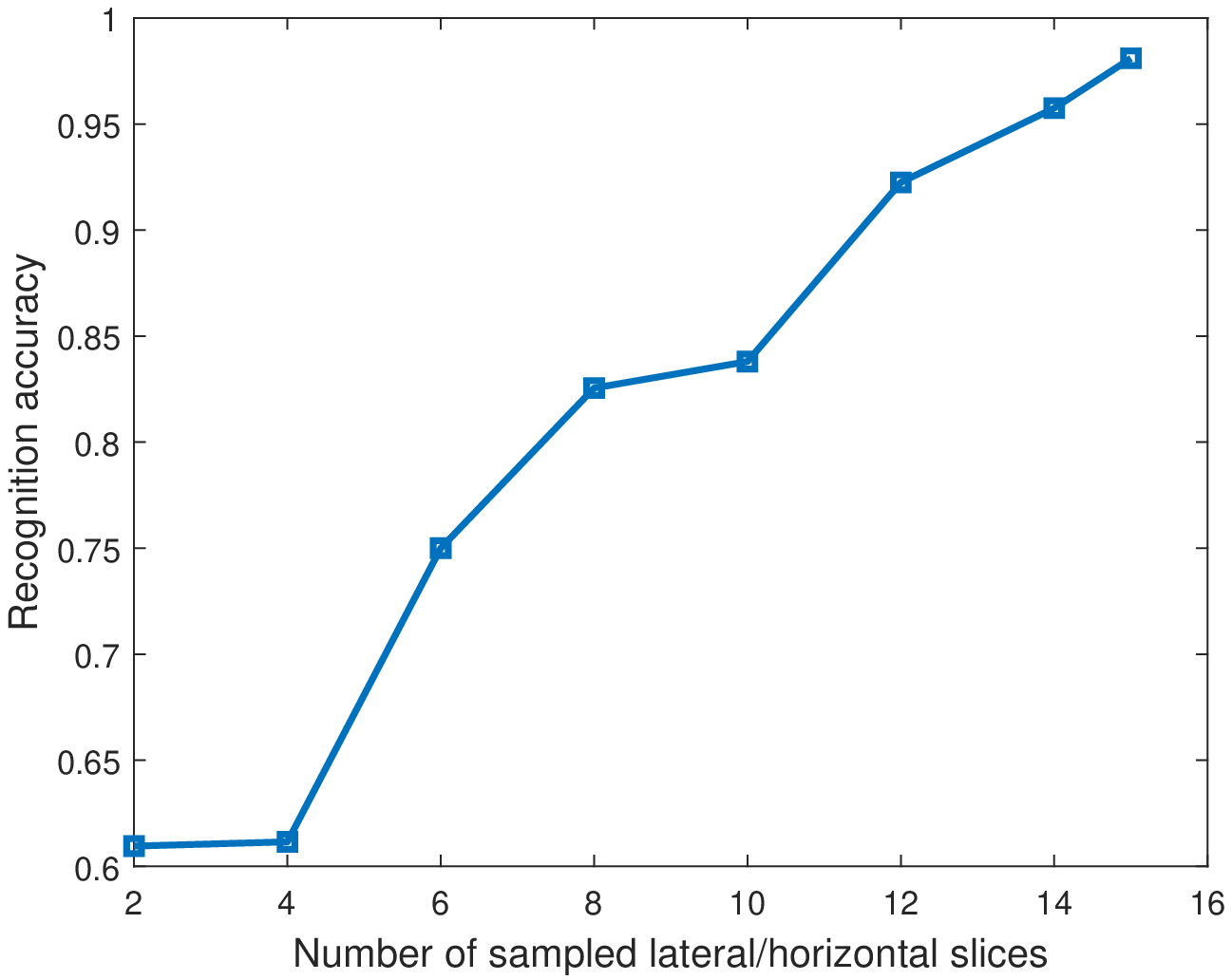}\\
\includegraphics[width=0.7\columnwidth]{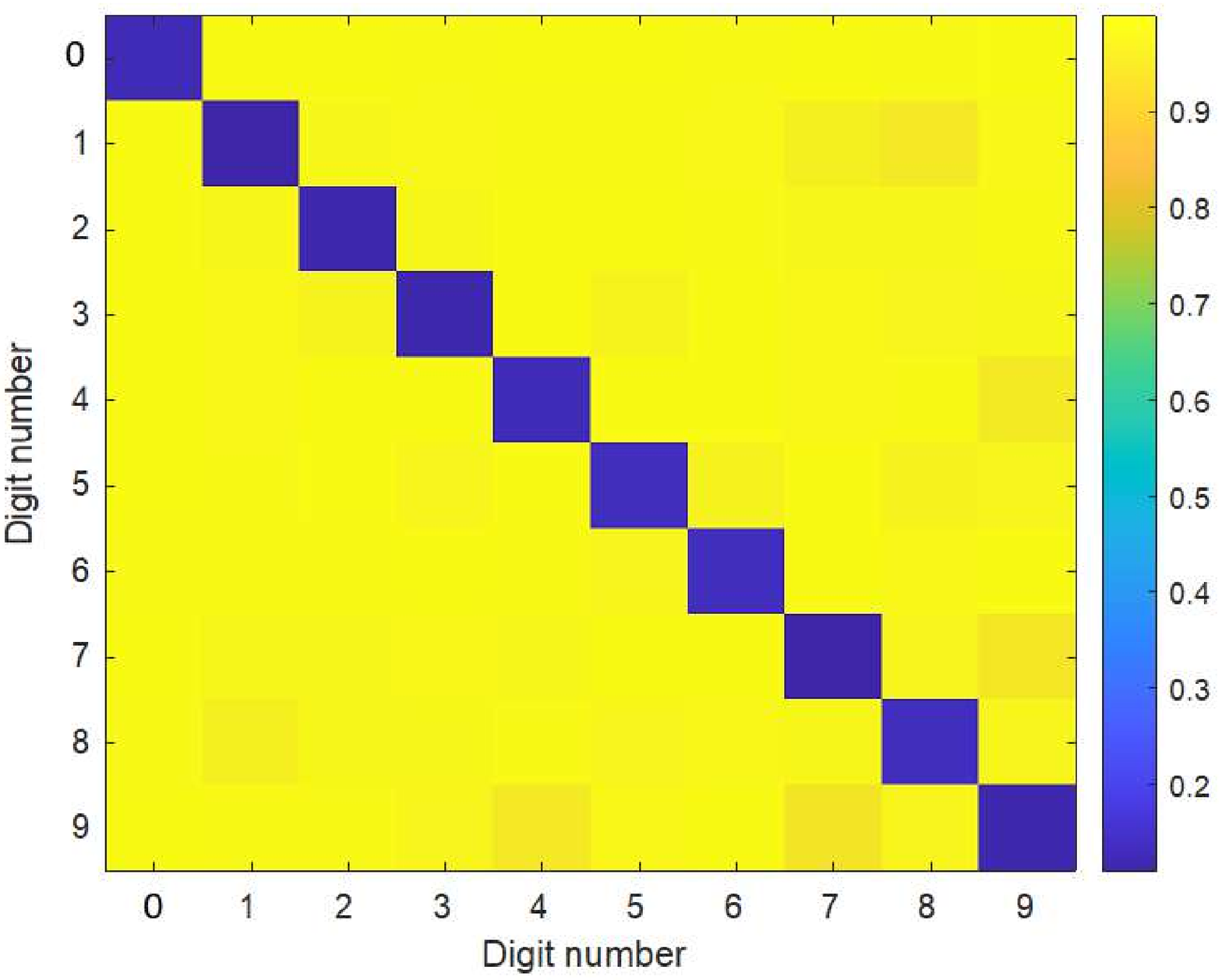}\\
\caption{\small{({\bf Upper}) The accuracy yielded by the lightweight classification model of digits $(1,7)$ using a low tubal rank approximation of the weight tensor by the proposed TDEIM approach for different numbers of sampled lateral and horizontal slices. ({\bf Bottom}) The classification accuracy of the lightweight model using a low tubal rank approximation of the weight tensor (with the tubal rank $R=15$) using the proposed TDEIM method for different combinations of digits for Example \ref{exa_5}.} }\label{Accuray}
\end{center}
\end{figure}

% \begin{figure}
% \begin{center}
% \includegraphics[width=0.7\columnwidth]{Imagesc.png}\\
% \caption{\small{Example \ref{exa_5}. The classification accuracy of the model using a low tubal rank approximation of the weight tensor ($R=15$) using the proposed TDEIM method for different combination of digits.} }\label{Image_all}
% \end{center}
% \end{figure}

\end{exa}

\section{Conclusion}\label{Sec:Con}
In this paper, we extended the discrete empirical interpolation method (DEIM) to tensors based on the t-product. The tubal DEIM (TDEIM) is used to select important horizontal and lateral slices of a given third order tensor. We studied the theoretical aspects of the TDEIM and conducted simulations on synthetic and real-world datasets. The results show the better accuracy of the proposed algorithm compared to other sampling algorithms such as top tubal leverage scores, tubal leverage scores sampling and inform sampling without replacement.   

\section{Conflict of interest}
The authors declare that they have no conflict of interest.

\bibliographystyle{elsarticle-num} 
\bibliography{cas-refs}

% \bibitem{}

% \end{thebibliography}
\end{document}